\begin{document}

\bibliographystyle{alpha}
\newcommand{\cn}[1]{\overline{#1}}
\newcommand{\e}[0]{\epsilon}
\newcommand{\EE}{\ensuremath{\mathbb{E}}}
\newcommand{\qq}[1]{(q;q)_{#1}}
\newcommand{\dx}[1]{\ensuremath{\nabla_{#1}}}
\newcommand{\pp}[1]{\ensuremath{\mathbf{p}_{#1}}}
\newcommand{\A}[2]{\ensuremath{\mathcal{A}^{(#1)}_{#2}}}
\newcommand{\LL}[2]{\ensuremath{\mathcal{L}^{(#1)}_{#2}}}
\newcommand{\gap}[1]{\ensuremath{\mathrm{gap}_{#1}}}
\newcommand{\Wk}{\ensuremath{\mathbb{W}^k_{\geq 0}}}
\newcommand{\GT}{\ensuremath{\mathbb{GT}}}
\newcommand{\link}{\ensuremath{Q}}
\newcommand{\PP}{\ensuremath{\mathbb{P}}}
\newcommand{\frakP}{\ensuremath{\mathfrak{P}}}
\newcommand{\frakQ}{\ensuremath{\mathfrak{Q}}}
\newcommand{\frakq}{\ensuremath{\mathfrak{q}}}
\newcommand{\R}{\ensuremath{\mathbb{R}}}
\newcommand{\Rplus}{\ensuremath{\mathbb{R}_{+}}}
\newcommand{\C}{\ensuremath{\mathbb{C}}}
\newcommand{\Z}{\ensuremath{\mathbb{Z}}}
\newcommand{\Weyl}[1]{\ensuremath{\mathbb{W}}^{#1}}
\newcommand{\Zgzero}{\ensuremath{\mathbb{Z}_{>0}}}
\newcommand{\Zgeqzero}{\ensuremath{\mathbb{Z}_{\geq 0}}}
\newcommand{\Zleqzero}{\ensuremath{\mathbb{Z}_{\leq 0}}}
\newcommand{\Q}{\ensuremath{\mathbb{Q}}}
\newcommand{\T}{\ensuremath{\mathbb{T}}}
\newcommand{\Y}{\ensuremath{\mathbb{Y}}}
\newcommand{\M}{\ensuremath{\mathbf{M}}}
\newcommand{\MM}{\ensuremath{\mathbf{MM}}}
\newcommand{\W}[1]{\ensuremath{\mathbf{W}}_{(#1)}}
\newcommand{\WM}[1]{\ensuremath{\mathbf{WM}}_{(#1)}}
\newcommand{\Zsd}{\ensuremath{\mathbf{Z}}}
\newcommand{\Fsd}{\ensuremath{\mathbf{F}}}
\newcommand{\symBM}{\ensuremath{\mathbf{W}}}
\newcommand{\symFE}{\ensuremath{\mathbf{S}}}

\newcommand{\Real}{\ensuremath{\mathrm{Re}}}
\newcommand{\Imag}{\ensuremath{\mathrm{Im}}}
\newcommand{\re}{\ensuremath{\mathrm{Re}}}

\newcommand{\Sym}{\ensuremath{\mathrm{Sym}}}

\newcommand{\bfone}{\ensuremath{\mathbf{1}}}

\newcommand{\OO}[0]{\Omega}
\newcommand{\F}[0]{\mathfrak{F}}
\newcommand{\poly}[0]{R}
\def \Ai {{\rm Ai}}
\def \sgn {{\rm sgn}}
\def \SS {\mathcal{S}}
\newcommand{\poles}{\mathbb{A}}
\def \ss {\mathcal{X}}
\newcommand{\var}{{\rm var}}

\newcommand{\ul}[2]{\underline{#1}_{#2}}
\newcommand{\qhat}[1]{\widehat{#1}^{q}}
\newcommand{\La}[0]{\Lambda}
\newcommand{\la}[0]{\lambda}
\newcommand{\ta}[0]{\theta}
\newcommand{\w}[0]{\omega}
\newcommand{\ra}[0]{\rightarrow}
\newcommand{\vectoro}{\overline}
\newtheorem{theorem}{Theorem}[section]
\newtheorem{partialtheorem}{Partial Theorem}[section]
\newtheorem{conj}[theorem]{Conjecture}
\newtheorem{lemma}[theorem]{Lemma}
\newtheorem{proposition}[theorem]{Proposition}
\newtheorem{corollary}[theorem]{Corollary}
\newtheorem{claim}[theorem]{Claim}
\newtheorem{formal}[theorem]{Critical point derivation}
\newtheorem{experiment}[theorem]{Experimental Result}
\newtheorem{prop}{Proposition}

\def\todo#1{\marginpar{\raggedright\footnotesize #1}}
\def\change#1{{\color{green}\todo{change}#1}}
\def\note#1{\textup{\textsf{\color{blue}(#1)}}}

\theoremstyle{definition}
\newtheorem{remark}[theorem]{Remark}

\theoremstyle{definition}
\newtheorem{example}[theorem]{Example}

\theoremstyle{definition}
\newtheorem{definition}[theorem]{Definition}

\theoremstyle{definition}
\newtheorem{definitions}[theorem]{Definitions}

\begin{abstract}
We introduce two new exactly solvable (stochastic) interacting particle systems which are discrete time versions of $q$-TASEP. We call these geometric and Bernoulli discrete time $q$-TASEP. We obtain concise formulas for expectations of a large enough class of observables of the systems to completely characterize their fixed time distributions when started from step initial condition. We then extract Fredholm determinant formulas for the marginal distribution of the location of any given particle.

Underlying this work is the fact that these expectations solve closed systems of difference equations which can be rewritten as free evolution equations with $k-1$ two-body boundary conditions -- discrete $q$-deformed versions of the quantum delta Bose gas. These can be solved via a nested contour integral ansatz. The same solutions also arise in the study of Macdonald processes, and we show how the systems of equations our expectations solve are equivalent to certain commutation relations involving the Macdonald first difference operator.
\end{abstract}

\title{Discrete time $q$-TASEPs}
\author[A. Borodin]{Alexei Borodin}
\address{A. Borodin,
Massachusetts Institute of Technology,
Department of Mathematics,
77 Massachusetts Avenue, Cambridge, MA 02139-4307, USA\newline
Institute for Information Transmission Problems, Bolshoy Karetny per. 19, Moscow 127994, Russia}
\email{borodin@math.mit.edu}

\author[I. Corwin]{Ivan Corwin}
\address{I. Corwin,
Massachusetts Institute of Technology,
Department of Mathematics,
77 Massachusetts Avenue, Cambridge, MA 02139-4307, USA\newline
Clay Mathematics Institute, 10 Memorial Blvd. Suite 902, Providence, RI 02903, USA}
\email{icorwin@mit.edu}

\maketitle


\section{Introduction}
The purpose of this paper is to introduce and analyze two exactly solvable discrete time versions of $q$-TASEP. Before introducing them we recall the continuous time Poisson $q$-TASEP which was previously studied in \cite{BorCor,BCS}.

Let us fix some notation used throughout. For $N\geq 1$ we denote the state of $q$-TASEP with $N$ particles as $\vec{x}(t)  = \big(+\infty\equiv x_0(t)>x_1(t)> x_2(t) > \cdots >x_N(t)\big)\in \Z^N$, where we have fixed a virtual particle at infinity by setting $x_0(t)\equiv +\infty$. The gap between particle $i$ and $i-1$ is denoted $\gap{i}(t) := x_{i-1}(t)-x_{i}(t) -1$.

\subsection{Continuous time Poisson $q$-TASEP}
We define and provide some background for the continuous time Poisson $q$-TASEP. All of the results of this paper will be stated simultaneously for this process as well as its two discrete time variants.

\begin{definition}
The $N$-particle continuous time Poisson $q$-TASEP with particle rate parameters $a_1,\ldots ,a_N>0$ is an interacting particle system $\vec{x}(t)$ in which for each $1\leq i\leq N$, $x_i(t)$ moves to $x_i(t)+1$ at exponential rate $a_i(1-q^{\gap{i}(t)})$ (which vanishes when $\gap{i}(t)=0$), where $q$ is a parameter in $(0,1)$. Here we assume that these exponentially distributed jumping events are all independent, and we note that since they occur in continuous time, no two jumps occur simultaneously, almost surely. Also note that the evolution of $x_i(\cdot)$ only depends on those particles with lower indices than $i$. Step initial condition is defined as setting $x_i(0)=-i$ for $1\leq i\leq N$.
\end{definition}

\begin{remark}\label{zerorem}
The continuous time Poisson $q$-TASEP and the recognition of its exact solvability comes from the work of \cite{BorCor} on Macdonald processes. It was soon after studied in its own right in \cite{BCS} via the type of many body system approach methods which we develop herein for the two discrete time variants of the process. Recently \cite{OConPei} showed how $q$-TASEP arises from considerations involving a $q$-deformed generalization of the dual RSK correspondence and \cite{BorPet} showed how it arises from a certain class of nearest-neighbor stochastic dynamics on Gelfand-Tsetlin patterns. It would be very interesting to extend those works or the method of \cite{COSZ} to include the (more general) discrete time $q$-TASEPs of the present paper as well.

The evolution of $\gap{i}(t) = x_{i-1}(t)-x_{i}(t) -1$ for $1\leq i\leq N$ (for the continuous time Poisson $q$-TASEP) is given by a totally asymmetric zero range process with site dependent jump rate $g_i(k)= a_i(1-q^k)$ and infinite sink and source at the boundary. A variant of this zero range process can be seen as corresponding to a particular representation of the $q$-Boson Hamiltonian considered in \cite{SasWad}. In \cite{SasWad}, integrability (in the form of $L$ and $R$ matrices satisfying Yang-Baxter relations) for this $q$-Boson Hamiltonian was shown, though the connection of that to the exact solvability of $q$-TASEP discussed below remains to be understood. A stationary (infinite lattice) variant of the above gap zero range process was also discussed immediately after Theorem 2.9 of \cite{BKS} and an $O(t^{2/3})$ upper and lower bound on the variance of the stationary current is established therein.
\end{remark}

Poisson $q$-TASEP has an interpretation as a model for traffic on a one-lane (discrete) road $\Z$ in which the rate at which cars jump forward is modulated by the distance to the next car (as well as a car dependent rate parameter $a_i$). As the distance goes to zero, the jump rate goes to zero, and as the distance grows, the jump rate approaches $a_i$. Step initial condition corresponds to an initially jammed configuration.

When $q\to 0$, continuous time Poisson $q$-TASEP becomes the well-studied model of continuous time TASEP (and likewise the discrete time versions of $q$-TASEP we introduce become known discrete time version of TASEP). The configuration of particles in TASEP for a fixed time $t$ can be described as a determinantal point process in which all correlation functions are given by minors of a single correlation kernel (cf. \cite{KJ,BorFer} and references therein). In other terminology, TASEP is free-fermion, or related to Schur processes, or non-interacting line ensembles. Given that structure there exists a clean path to Fredholm determinant formulas for marginal distributions, which in turn allow readily for asymptotic analysis (cf. \cite{BorChap,AnderChap} and references therein).

The present work, where $q\in (0,1)$, does not appear to be determinantal, hence new ideas are necessary to study and extract asymptotic distributional information about the processes considered. In recent years there has been a flurry of activity surrounding the analysis of non-determinantal yet still exactly solvable stochastic interacting particle systems \cite{TW1,TW2,TW3,SeppLog,ACQ,SaSp,Dot,CDR,OCon,COSZ,BorCor,BCS,OConPei,BorPet}. In \cite{BorCor,BCS} it was explained how, instead of the determinantal structure of correlation functions, these non-determinantal systems are exactly solvable due to the existence of concise and exact formulas for expectations of large classes of observables of the particle systems (in fact, in many cases large enough classes to uniquely characterize the fixed time distribution of the particle systems).

A natural question about $q$-TASEP is to compute the distribution of $x_n(t)$ (which is the location of particle $n$ at time $t$). There are (presently) two approaches to compute this distribution. The first is through the theory of Macdonald processes \cite{BorCor}, and the second is through the many body system (or duality) approach \cite{BCS}. 

The first approach is entirely based on the integrable properties of Macdonald polynomials (see Section \ref{MacSec} for a brief review on the relationship of the two approaches), and these properties naturally lead to the discovery of $q$-TASEP as well as the computation of nested contour integral formulas for expectations of certain observables of the process. In particular, for step initial condition this shows that (see also \cite{BCGS} or Theorem \ref{momthm} below) for $n_1\geq n_2\geq \cdots \geq n_k>0$,
\begin{equation}\label{exp1}
\EE\left[ \prod_{i=1}^{k} q^{x_{n_i}(t)+n_i} \right]= \frac{(-1)^k q^{k(k-1)/2}}{(2\pi \iota)^k} \int \cdots \int \prod_{1\leq A<B\leq k} \frac{z_A-z_B}{z_A-qz_B} \prod_{j=1}^{k} \left(\prod_{i=1}^{n_j} \frac{a_i}{a_i-z_j}\right) e^{(q-1)tz_j} \frac{dz_j}{z_j},
\end{equation}
where the contour of integration for $z_A$ contains $a_1,\ldots, a_N$ as well as $\{qz_B\}_{B>A}$, but not zero (see Figure \ref{circontours} for an example of such a contour when all $a_i\equiv 1$). The above formula is given in Theorem \ref{momthm} along with parallel formulas for the discrete time versions.

Since the observables $q^{x_n(t)+n}$ are all in $(0,1)$, their moment problem is well-posed. This means that the above formulas uniquely characterize the joint distribution of $q$-TASEP at time $t$ (provided it is started from step initial condition at time zero). It remains a challenge to extract manageable (from the perspective of large $t$ and $n$ asymptotics) formulas for joint distributions, though it is understood how to derive a Fredholm determinant formula which characterizes the one-point distribution of $x_n(t)$. This is described in Theorem \ref{distthm}.

The second approach to solving the continuous time Poisson $q$-TASEP is based solely on the observation that the dynamics of the particle system implies that the expectations on the left-hand side of (\ref{exp1}) satisfy certain closed systems of coupled ODEs, which we call {\it many body systems}. These systems have unique solutions and it is easy to check that the nested contour integral formulas on the right-hand side of (\ref{exp1}) satisfy these systems (and hence are equal to the expectations on the left-hand side of (\ref{exp1})). This second approach is more direct (it avoids explicit mention of Macdonald polynomials) and appears to be more general (it applies to ASEP \cite{BCS}, which is not known to fit into the theory of Macdonald processes). On the other hand, it requires non-trivial input of a suitable interacting particle system, the correct observables to study and some inspiration in guessing a closed form solution to the associated many body system. For all of the versions of $q$-TASEP the fact that these expectations satisfy the many body system can be seen immediately from the Macdonald process perspective as a consequence of certain commutation relations involving Macdonald difference operators (see Section \ref{MacSec}).

It is this second approach which we employ in this paper to study the two exactly solvable discrete time $q$-TASEPs we now introduce.

Before we proceed, let us fix some additional notation. We use $a_1,\ldots, a_N>0$ to denote particle rate parameters, and $\alpha_1,\alpha_2,\ldots\in (0,1)$ and $\beta_1,\beta_2,\ldots\in (0,\infty)$ to denote (discrete) time dependent jump parameters. The indicator function of an event $E$ is written $\mathbf{1}\{E\}$. The $q$-Pochhammer symbol is defined as
$$
(a;q)_{n} = \prod_{i=0}^{n-1} (1-aq^i), \qquad (a;q)_{\infty} = \prod_{i\geq 0} (1-a q^i).
$$
Finally, we define a variant of the Weyl chamber as
$$\Wk = \big\{\vec{n}= (n_1,\ldots,n_k)\in \Z^k: N\geq n_1\geq n_2\geq \cdots \geq n_k\geq 0\big\}.$$

\subsection{Discrete time $q$-TASEPs}

The two discrete time versions of $q$-TASEP which we introduce and study were not fabricated out of thin air, but rather came from further investigations into dynamics related to Macdonald processes which may be detailed in a future work (cf. \cite{BorPet} for some related developments). All three versions of $q$-TASEP are illustrated in Figure \ref{qTASEPs}.

\begin{figure}
\begin{center}
\includegraphics[scale=.8]{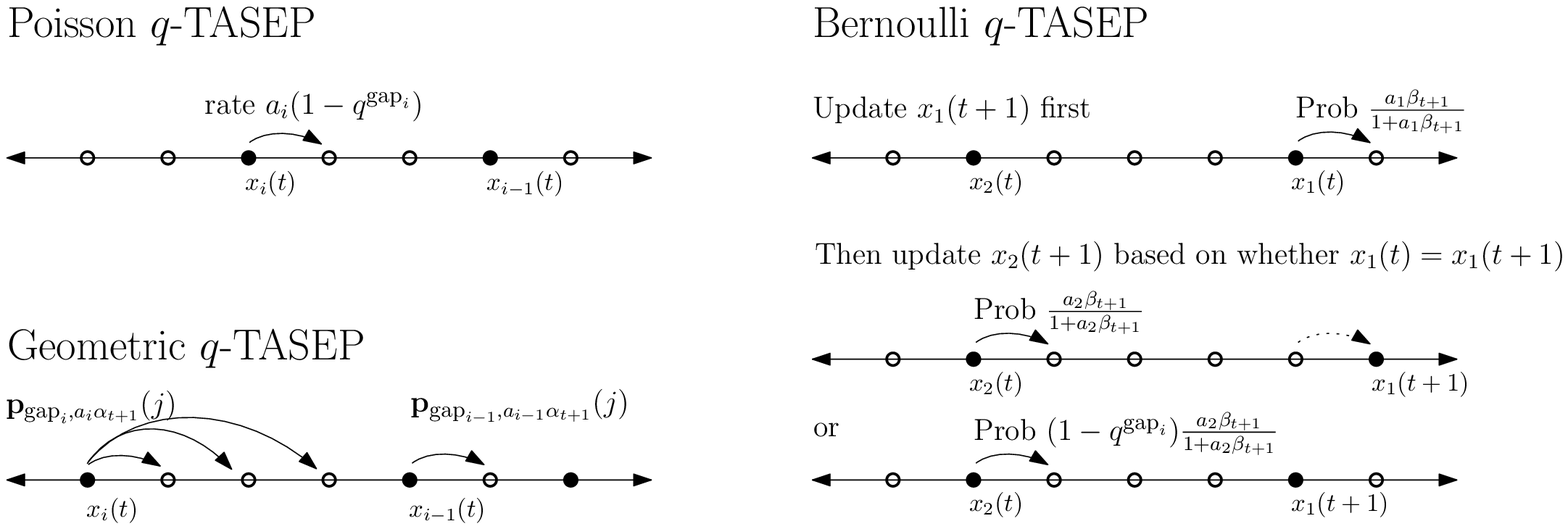}
\end{center}
\caption{Continuous time Poisson $q$-TASEP, discrete time geometric $q$-TASEP with parallel update, and discrete time Bernoulli $q$-TASEP with right-to-left sequential update.}\label{qTASEPs}
\end{figure}

\subsubsection{Discrete time geometric $q$-TASEP}

We will make use of the following $q$-deformation of the truncated geometric distribution. For $m\in \Z_{\geq 0}$ and $\alpha\in (0,1)$ the distribution $\pp{m,\alpha}(j)$  is defined as
\begin{equation*}
\pp{m,\alpha}(j)  = \alpha^j (\alpha;q)_{m-j} \frac{(q;q)_{m}}{(q;q)_{m-j}(q;q)_j} \mathbf{1}\{0\leq j\leq m\}.
\end{equation*}
For $m=+\infty$ we extend the definition so that
\begin{equation*}
\pp{+\infty,\alpha}(j)  = \alpha^j (\alpha;q)_{\infty} \frac{1}{(q;q)_j} \mathbf{1}\{0\leq j\}.
\end{equation*}
\begin{lemma}\label{sumone}
For $m\in \Z_{\geq 0} \cup \{+\infty\}$ and $\alpha\in (0,1)$, $\pp{m,\alpha}(j) \geq 0$ for all $j$ and
\begin{equation*}
\sum_{j=0}^{m} \pp{m,\alpha}(j) = 1.
\end{equation*}
\end{lemma}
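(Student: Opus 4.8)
The plan is to recognize the proposed distribution as (up to renaming) the terminating form of the $q$-binomial theorem, so that the normalization identity $\sum_j \pp{m,\alpha}(j)=1$ becomes a known $q$-series evaluation, with the nonnegativity handled separately by a direct sign analysis. First I would treat nonnegativity: for $m\in\Z_{\geq 0}$ and $0\leq j\leq m$, the factor $\alpha^j$ is positive since $\alpha\in(0,1)$, the ratio $(q;q)_m/((q;q)_{m-j}(q;q)_j)$ is the $q$-binomial coefficient $\binom{m}{j}_q$ which is a polynomial in $q$ with nonnegative (indeed positive) coefficients, hence positive for $q\in(0,1)$, and $(\alpha;q)_{m-j}=\prod_{i=0}^{m-j-1}(1-\alpha q^i)$ is a product of factors each lying in $(0,1)$, hence positive; outside $0\leq j\leq m$ the indicator kills the term. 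For $m=+\infty$ the same reasoning applies with $(\alpha;q)_\infty=\prod_{i\geq 0}(1-\alpha q^i)>0$ (the product converges absolutely since $q\in(0,1)$) and $(q;q)_j>0$.

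For the summation identity with $m\in\Z_{\geq 0}$, I would write
\begin{equation*}
\sum_{j=0}^{m}\pp{m,\alpha}(j)=(\alpha;q)_m\sum_{j=0}^{m}\frac{(q;q)_m}{(q;q)_{m-j}(q;q)_j}\,\frac{\alpha^j}{(\alpha;q)_j},
\end{equation*}
using the telescoping relation $(\alpha;q)_{m-j}=(\alpha;q)_m/(\alpha q^{m-j};q)_j$... more conveniently, I would instead match the expression directly against the $q$-binomial theorem in its finite form, namely
\begin{equation*}
\sum_{j=0}^{m}\begin{bmatrix}m\\j\end{bmatrix}_q q^{j(j-1)/2}x^j=\prod_{i=0}^{m-1}(1+xq^i)=(-x;q)_m,
\end{equation*}
or, in the form better suited here, the identity $\sum_{j=0}^{m}\binom{m}{j}_q \frac{(a;q)_j}{(q;q)_j}(\cdots)$ — more precisely the specialization of the $q$-Chu--Vandermonde / ${}_1\phi_0$ summation that gives $\sum_{j=0}^{m}\binom{m}{j}_q(\alpha;q)_{m-j}\alpha^j=1$. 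The cleanest route: substitute $k=m-j$, so the sum becomes $\sum_{k=0}^m \binom{m}{k}_q (\alpha;q)_k \alpha^{m-k}$, and recognize this as $\alpha^m\,{}_1\phi_0(q^{-m};-;q,\alpha q^m/\alpha)=\alpha^m\,{}_1\phi_0(q^{-m};-;q,q^m)$ after the reflection $\binom{m}{k}_q=\binom{m}{m-k}_q$ and the standard conversion $(\alpha;q)_k$ into a ${}_1\phi_0$ argument; the ${}_1\phi_0$ sum (the $q$-binomial theorem) evaluates in closed form to $(\alpha q^m\cdot q^m/\alpha;q)_\infty/(q^m;q)_\infty$-type product which collapses to $\alpha^{-m}$, yielding total mass $1$. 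I would present whichever of these equivalent manipulations is shortest; an alternative fully elementary route is induction on $m$ using the Pascal-type recursion $\binom{m}{j}_q=\binom{m-1}{j}_q+q^{m-j}\binom{m-1}{j-1}_q$ together with $(\alpha;q)_{m-j}=(1-\alpha q^{m-j-1})(\alpha;q)_{m-j-1}$, reducing the claim for $m$ to the claim for $m-1$.

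For $m=+\infty$, the identity $\sum_{j\geq 0}\pp{+\infty,\alpha}(j)=(\alpha;q)_\infty\sum_{j\geq 0}\alpha^j/(q;q)_j=1$ is exactly Euler's identity $\sum_{j\geq 0}\alpha^j/(q;q)_j=1/(\alpha;q)_\infty$, valid for $|\alpha|<1$, $|q|<1$; convergence of the series is immediate from the ratio test since successive terms are multiplied by $\alpha/(1-q^{j+1})\to\alpha<1$. One could also obtain this case by letting $m\to\infty$ in the finite identity, checking that $(\alpha;q)_{m-j}\to(\alpha;q)_\infty$ and $(q;q)_m/(q;q)_{m-j}\to 1$ for each fixed $j$ and dominating the sum to justify the interchange of limit and summation. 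The main obstacle — such as it is — is purely bookkeeping: correctly pinning down which standard $q$-series identity the expression is an instance of and keeping the Pochhammer index shifts straight; once the right identity is named, both the finite and infinite cases are one line, and I would lean toward the self-contained induction to avoid invoking ${}_1\phi_0$ machinery at all.
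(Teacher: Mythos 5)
Your proposal is correct and, in the route you say you would actually present (induction on $m$ via the Pascal-type recursion for the $q$-binomial coefficients, with positivity handled by direct inspection of the factors), it coincides with the paper's proof, which cites exactly that recursion from (10.0.3) of \cite{AAR}; your explicit treatment of the $m=+\infty$ case via Euler's identity $\sum_{j\ge 0}\alpha^j/(q;q)_j=1/(\alpha;q)_\infty$ is a welcome addition the paper leaves implicit. The ${}_1\phi_0$ detour in the middle is garbled as written, but since you discard it in favor of the induction, it does not affect the validity of the argument.
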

\begin{proof}
The positivity is immediate as $\alpha,q\in (0,1)$. That these $\pp{m,\alpha}(j)$ sum to one can be seen inductively (in $m$) by using the recursion (see (10.0.3) in \cite{AAR}) for the $q$-Binomial coefficients (which occur in the definition of $\pp{m,\alpha}(j)$ as the fraction).
\end{proof}

We may now introduce the first discrete time version of $q$-TASEP. We work in the greatest generality for which this is exactly solvable. For simplicity, a reader may want to think of all parameters $a_i\equiv 1$ and all $\alpha_t \equiv \alpha$.

\begin{definition}
The $N$-particle discrete time geometric $q$-TASEP with particle rate parameters $a_1,\ldots, a_N>0$ and time dependent jump parameters $\alpha_1,\alpha_2,\ldots \in (0,1)$ is an interacting particle system $\vec{x}(t)$ in discrete time $t\in \Z_{\geq 0}$. At time $t+1$ the system stochastically evolves from its state at time $t$ according to the following {\it parallel} update rule: For each $1\leq i\leq N$,
\begin{equation*}
\PP\big(x_{i}(t+1) = x_{i}(t) + j \big\vert \gap{i}(t)\big) = \pp{\gap{i}(t),a_i \alpha_{t+1}}(j)
\end{equation*}
where the update is independent for each $i$ and $t$. Since we observe the convention of $x_0(t)=+\infty$, in the above update rule $\gap{1}(t)\equiv +\infty$. We assume throughout the paper at the $a_i$ and $\alpha_t$ are such that $\alpha_i\alpha_t < 1$ for all $i$ and $t$, otherwise the process is not well-defined.
\end{definition}

These dynamics mean that at time $t+1$ each particle $x_i$ hops to a random location in the set $\{x_i(t), x_i(t)+1,\ldots, x_{i-1}(t)-1\}$ with hop length distributed according to independent random variables with distribution $\pp{\gap{i}(t),a_i\alpha_{t+1}}$. These dynamics clearly preserve the order of particles.

\begin{remark}
When $q\to 0$ the jump probabilities become $$\pp{\gap{i}(t),a_i\alpha_{t+1}}(j) = (a_i\alpha_{t+1})^j(1-a_i\alpha_{t+1}) \mathbf{1}\{1\leq j<\gap{i}(t)\} + (a_i\alpha_{t+1})^{\gap{i}(t)} \mathbf{1}\{j=\gap{i}(t)\}.$$ This corresponds with a geometric jump of rate $a_i\alpha_{t+1}$ in which all of the weight given to jumps $j\geq \gap{i}(t)$ are collapsed onto the weight of $j=\gap{i}(t)$. In other words, in parallel each particle attempts a geometric jump to the right subject to blocking by the previous location of the next particle. This process was studied previously in \cite{WarWind} as a marginal of dynamics on Gelfand-Tsetlin patterns which preserve the class of Schur processes.
\end{remark}

\subsubsection{Discrete time Bernoulli $q$-TASEP}

\begin{definition}
The $N$-particle discrete time Bernoulli $q$-TASEP with particle rate parameters $a_1,\ldots, a_N>0$ and time dependent jump parameters $\beta_1,\beta_2,\ldots \in (0,\infty)$ is an interacting particle system $\vec{x}(t)$ in discrete time $t\in \Z_{\geq 0}$. At time $t+1$ the system stochastically evolves from its state at time $t$ according to the following {\it sequential} update rules: First update $x_1(t+1)$ according to
$$
\PP\Big(x_{1}(t+1) = x_{1}(t) + b\Big) = \begin{cases} \frac{1}{1+a_1\beta_{t+1}} &\textrm{for } b=0,\\[.5em] \frac{a_1\beta_{t+1}}{1+a_1\beta_{t+1}} &\textrm{for } b=1,\\[.5em] 0 &\textrm{otherwise}.\end{cases}
$$
Then, for $i=2$ through $N$ (in that order) if $x_{i-1}(t+1)-x_{i-1}(t)=1$ (i.e., the previous jump occurred) then update
$$
\PP\Big(x_{i}(t+1) = x_{i}(t) + b\Big) = \begin{cases} \frac{1}{1+a_i\beta_{t+1}} &\textrm{for } b=0,\\[.5em] \frac{a_i\beta_{t+1}}{1+a_i\beta_{t+1}} &\textrm{for } b=1,\\[.5em] 0 &\textrm{otherwise}.\end{cases}
$$
If $x_{i-1}(t+1)-x_{i-1}(t)=0$ (i.e., the previous jump did not occur) then update
$$
\PP\Big(x_{i}(t+1) = x_{i}(t) + b\Big) = \begin{cases} \frac{1 + a_i\beta_{t+1} q^{\gap{i}(t)}}{1+a_i\beta_{t+1}} &\textrm{for } b=0,\\[.5em] (1-q^{\gap{i}(t)})\frac{a_i\beta_{t+1}}{1+a_i\beta_{t+1}} &\textrm{for } b=1,\\[.5em] 0 &\textrm{otherwise}.\end{cases}
$$
The updates are all independently distributed.
\end{definition}

These dynamics mean that $x_1$ jumps right by one with probability $\tfrac{a_1\beta_{t+1}}{1+a_1\beta_{t+1}}$ and otherwise stays put. Sequentially, if $x_{i-1}$ jumped, then $x_i$ jumps right by one with probability  $\tfrac{a_i\beta_{t+1}}{1+a_i\beta_{t+1}}$, otherwise if $x_{i-1}$ stayed put, then $x_i$ jumps right by one with probability $(1-q^{\gap{i}(t)})\tfrac{a_i\beta_{t+1}}{1+a_i\beta_{t+1}}$. These dynamics clearly preserve the order of particles.

\begin{remark}
When $q\to 0$ the update rule becomes the following: At time $t+1$, particle $x_1$ jumps to the right by 1 with probability $\tfrac{a_1\beta_{t+1}}{1+a_1\beta_{t+1}}$ and stays put otherwise; then sequentially for $x_2$ through $x_N$, the particle $x_i$ jumps to the right by 1 with probability $\tfrac{a_i\beta_{t+1}}{1+a_i\beta_{t+1}}$ if the destination site is unoccupied, otherwise it stays put. To be clear, if $x_1(t)=5$ and $x_2(t)=4$ (neighbors) and $x_1$ jumps right so that $x_1(t+1)=6$, then the rightward jump for $x_2$ is available and $\PP(x_2(t+1)=5)= \tfrac{a_2\beta_{t+1}}{1+a_2\beta_{t+1}}$. This sequential discrete time TASEP was studied previously in \cite{BorFer} as a marginal of dynamics on Gelfand-Tsetlin patterns which preserve the class of Schur processes.
\end{remark}

\subsubsection{Scaling limits}
As $\alpha$ and $\beta$ go to zero and time is suitably rescaled, the discrete time $q$-TASEPs both converge to the continuous time Poisson version. That is to say (taking all $\alpha_t\equiv \alpha$ and $\beta_t\equiv \beta$) that with $\alpha = (1-q) \e$ and $t=\e^{-1} \tau$ geometric $q$-TASEP $\vec{x}(t)$ converges as a process to continuous time Poisson $q$-TASEP with $\tau$ representing time. The same result holds for Bernoulli $q$-TASEP with $\beta = \e$ and $t=\e^{-1}\tau$.

As $q\to 1$, the dynamics of the continuous time Poisson $q$-TASEP converge (under appropriate scaling) to the solution of the semi-discrete stochastic heat equation (equivalently the O'Connell-Yor directed polymer \cite{OY,OCon}) -- see Theorem 4.1.26 of \cite{BorCor} or Proposition 6.2 of \cite{BCS}. This semi-discrete stochastic heat equation converges under diffusive space/time scaling and weak noise scaling to the continuum stochastic heat equation \cite{QRMF,AKQ,ACQ}. It is also shown in \cite{QRMF} that the continuous time Poisson $q$-TASEP has a direct limit to the solution to the continuum stochastic heat equation. Presumably a similar sequence of limits for the discrete time $q$-TASEP dynamics should exist. It would be interesting to investigate whether there exist other degenerations of these discrete time $q$-TASEPs besides those just mentioned. One could, for instance, keep the $\alpha$ or $\beta$ parameters fixed and scaling $q\to 1$.  At least for the geometric case, one expects to make some contact with the work of \cite{COSZ}.

\subsection{Moment formulas and distribution functions}
The three versions of $q$-TASEP share the following three surprising properties, which amount to their exact solvability:

\begin{enumerate}
\item The expectations of observables $\prod_{i=1}^{k} q^{x_{n_i}(t)+n_i}$ for $\vec{n} \in \Wk$ evolve according to closed systems of coupled ODEs (in the continuous time case) or coupled difference equations (in the discrete time case). We call these systems {\it true evolution equations}.
\item The true evolution equations are almost constant coefficient or separable, except for effects which arise when some of the $n_i$ are close together. Rather than trying to solve the true evolution equation directly, one can look for solutions to the free evolution equation (i.e., the constant coefficient and separable system neglecting the boundary effects) on the larger space $\vec{n}\in \Z_{\geq 0}^k$ which have the right initial data when restricted to $\Wk$ and which satisfy certain boundary conditions when some of the $n_i$ are all equal. The restriction of such a solution to $\Wk$ will coincide with the solution to the true evolution equation. Generally, for every possible combination of clusters of $\vec{n}$ (i.e., strings of equal coordinates) there will be additional boundary conditions which must be satisfied. It turns out that for the three versions of $q$-TASEP we consider, it suffices to consider only $k-1$ two-body boundary conditions (which ends up being the same for all three systems) corresponding to when $n_i=n_{i+1}$. That the two-body boundary conditions imply all many body boundary conditions is the hallmark of {\it integrability} in the language of (quantum) many body systems. This type of reduction goes back to the 1931 work of on diagonalizing the Heisenberg spin chain \cite{Bethe}. Our reduction, under the scaling limit to the continuum stochastic heat equation mentioned above, cincides with that for the quantum delta Bose gas (also known as the Lieb-Liniger model \cite{LL}).
\item A general class of solutions to the free evolution equation exists since it is separable and constant coefficient. It is not immediately clear how to combine solutions from this class so as to additionally satisfy the $k-1$ two-body boundary conditions and the initial data. However, for the initial data corresponding to step initial condition, it is also possible to (easily) check that {\it nested contour integral formulas} (such as in (\ref{exp1})) solve the free evolution equation and satisfy the $k-1$ two-body boundary conditions. One may speculate that these formulas should be related to the {\it Bethe ansatz} for diagonalizing this system (cf. \cite{HO,BorCorhalfspace} for such a relationship for a limiting continuum version of these many body problems). From this it would be possible to produce solutions corresponding to general initial conditions. To our knowledge the Bethe ansatz has not been worked out sufficiently in this context (see however related work of \cite{SasWad,Pov,Takeyama}). 
\end{enumerate}

\subsection{Outline}
In Section \ref{results} we record the consequences of the above mentioned properties. The rest of the paper is devoted to proving these results along the lines of the three steps given above. In particular, in Section \ref{truesec} we prove that the expectations of the above mentioned observables do satisfy explicit true evolution equations. In Section \ref{freesec} we demonstrate the integrability of these systems by reducing consideration to free evolution equations with $k-1$ boundary conditions. In Section \ref{checkingformulas} we check that for initial data corresponding to step initial conditions this system is indeed solvable via a nested contour integral formula. In Section \ref{endsec} we also briefly remark on the relationship between the true evolution equations and Markov process duality as well as certain commutation relations involving Macdonald difference operators.

\subsection{Acknowledgements}
AB was partially supported by the NSF grant DMS-1056390. IC was partially supported by the NSF through DMS-1208998 as well as by Microsoft Research through the Schramm Memorial Fellowship, and by a Clay Mathematics Institute Research Fellowship.

\section{Main results}\label{results}

We state in parallel our main results for all three variants of $q$-TASEP. To facilitate this we use $\vec{x}(t)$ for each process and instead use different expectation symbols to denote running the different dynamics.

\begin{theorem}\label{momthm}
Fix $q\in (0,1)$. Consider:
\begin{enumerate}
\item Continuous time $q$-TASEP with particle rate parameters $a_1,\ldots, a_N>0$. Let $\EE^{{\rm Poi}}$ represent the expectation operator for this process started from step initial condition. Define
\begin{equation*}
f^{{\rm Poi}}_{t}(z) = e^{tz};
\end{equation*}
\item Discrete time geometric $q$-TASEP with particle rate parameters $a_1,\ldots, a_N>0$ and time dependent jump parameters $\alpha_1,\alpha_2,\ldots \in (0,1)$. Let $\EE^{{\rm geo}}$ represent the expectation operator for this process started from step initial condition. Define
\begin{equation*}
f^{{\rm geo}}_{t}(z) = \prod_{s=1}^{t}\frac{1}{(\alpha_s z;q)_{\infty}};
\end{equation*}
\item Discrete time Bernoulli $q$-TASEP with particle rate parameters $a_1,\ldots, a_N>0$ and time dependent jump parameters $\beta_1,\beta_2,\ldots \in (0,\infty)$. Let $\EE^{{\rm Ber}}$ represent the expectation operator for this process started from step initial condition. Define
\begin{equation*}
f^{{\rm Ber}}_{t}(z) = \prod_{s=1}^{t}(1+\beta_s z).
\end{equation*}
\end{enumerate}
Fix $k\geq 1$, then for all $\vec{n}\in \Wk$ (i.e., $n_1\geq n_2\geq \cdots \geq n_k\geq 0$) and $\ell\in \big\{{\rm Poi}, {\rm geo}, {\rm Ber}\big\}$,
\begin{equation}\label{momthmeq}
\EE^{\ell} \left[ \prod_{i=1}^{k} q^{x_{n_i}(t)+n_i} \right] = \frac{(-1)^k q^{\frac{k(k-1)}{2}}}{(2\pi \iota)^k} \int \cdots \int \prod_{1\leq A<B\leq k} \frac{z_A-z_B}{z_A-qz_B} \prod_{j=1}^{k} \left(\prod_{i=1}^{n_j}\frac{a_i}{a_i-z_j}\right) \frac{f^{\ell}(qz_j)}{f^{\ell}(z_j)} \frac{dz_j}{z_j},
\end{equation}
where the contour of integration for $z_A$ contains $a_1,\ldots, a_N$, and $\{qz_B\}_{B>A}$ but not 0 or any poles of $\tfrac{f^{\ell}(qz)}{f^{\ell}(z)}$.
\end{theorem}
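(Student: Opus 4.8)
The plan is to follow the three-step program sketched in the introduction, treating the three variants $\ell\in\{\mathrm{Poi},\mathrm{geo},\mathrm{Ber}\}$ in parallel. Write $H^\ell(t;\vec n):=\EE^\ell\big[\prod_{i=1}^k q^{x_{n_i}(t)+n_i}\big]$ for $\vec n\in\Wk$.

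\textbf{Step 1 (true evolution equations).} I would first show that these expectations satisfy closed systems of equations. The mechanism is that each particle's update depends on its gap only through the quantity $q^{\gap{m}}$, together with the identity $q^{\gap{m}}\,q^{x_m+m}=q^{x_{m-1}+(m-1)}$, so that applying the generator (continuous time) or the one-step Markov operator (discrete time) to $\prod_i q^{x_{n_i}+n_i}$ re-expresses the result within the same family of observables. In the geometric case one uses that $\pp{m,a_m\alpha_{t+1}}$ has an explicit ($q$-exponential type) transform, giving $\EE\big[q^{x_m(t+1)+m}\,\big|\,\mathcal F_t\big]$ as an affine combination of $q^{x_m(t)+m}$ and $q^{x_{m-1}(t)+(m-1)}$; in the Bernoulli case one additionally conditions on whether $x_{m-1}$ just moved, and the two resulting cases recombine into an affine relation of the same shape. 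Passing to products, away from collisions (all $n_i$ distinct) this yields a separable, constant-coefficient system; where coordinates coincide, extra collision terms appear. I record these as the \emph{true evolution equations}, note the initial data $H^\ell(0;\vec n)\equiv1$ (from $x_{n_i}(0)=-n_i$, so $q^{x_{n_i}(0)+n_i}=1$), and observe that the system has a unique bounded solution on $\Wk$ because all observables lie in $(0,1]$.

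\textbf{Step 2 (free evolution equations with two-body boundary conditions).} Next I would pose the \emph{free evolution equation} --- the same separable, constant-coefficient system but on all of $\vec n\in\Z_{\geq0}^k$ with no collision terms --- together with $k-1$ local two-body boundary conditions imposed on $\{n_i=n_{i+1}\}$ (a relation between $\dx{i}$ and $\dx{i+1}$ involving only $q$, hence identical for all three variants; under the $q\to1$ continuum scaling it degenerates to the $\delta$-Bose gas condition) and the condition $u(\vec n)=0$ whenever some $n_i=0$. The key lemma is: if $u$ solves the free equation on $\Z_{\geq0}^k$, satisfies these boundary conditions, and agrees with $H^\ell(0;\cdot)$ on $\Wk$ at $t=0$, then $u$ restricted to $\Wk$ equals $H^\ell$. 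This is proved by checking that the free equation plus the $k-1$ two-body conditions force, on $\Wk$, exactly the collision terms of the true evolution equation --- i.e. that the two-body boundary conditions imply all many-body ones --- after which Step 1's uniqueness finishes the argument. This reduction is the integrability input.

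\textbf{Step 3 (the nested contour integral solves the free system).} Finally I would verify that $u(t;\vec n)$, the right-hand side of (\ref{momthmeq}), meets every requirement of Step 2. Its integrand factors as $\prod_{1\leq A<B\leq k}\frac{z_A-z_B}{z_A-qz_B}$ times a product over $j$ of the single-variable factor $\varphi^\ell_t(z_j;n_j)=\big(\prod_{i=1}^{n_j}\frac{a_i}{a_i-z_j}\big)\frac{f^\ell_t(qz_j)}{f^\ell_t(z_j)}$. Since the free operator is separable it suffices that, for each fixed $z$, the function $(t,n)\mapsto\varphi^\ell_t(z;n)$ is an eigenfunction of the one-variable free evolution operator: a short computation shows $\prod_{i=1}^{n}\frac{a_i}{a_i-z}$ carries the $n$-dependence, while the $t$-dependence works out precisely because $f^\ell$ is chosen so that $f^\ell_{t+1}(qz)/f^\ell_t(qz)$ divided by $f^\ell_{t+1}(z)/f^\ell_t(z)$ equals the one-step multiplier dictated by the free dispersion (with the analogous statement for the time-derivative in continuous time). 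The two-body boundary conditions then follow from the standard antisymmetrization argument: the factor $\frac{z_A-z_B}{z_A-qz_B}$ is arranged so that interchanging $z_i\leftrightarrow z_{i+1}$ in the nested integral produces exactly the residue at $z_i=qz_{i+1}$ that cancels the boundary term. The vanishing when some $n_i=0$ is a contour-deformation check, and the identity $u(0;\vec n)=1$ for $\vec n\in\Wk$ follows (using $f^\ell_0\equiv1$) by successively taking residues at $z_j=a_i$, collapsing the integral via a known $q$-combinatorial identity.

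\textbf{Main obstacle.} I expect the real work to lie in Step 1 for the Bernoulli model and in Step 2. The sequential update of Bernoulli $q$-TASEP makes the one-step transition of $x_i$ depend on the just-updated location of $x_{i-1}$, so producing a genuinely closed equation for the product observables --- rather than one cluttered with indicators of which earlier particles moved --- requires a careful telescoping over the sequential updates, and tracking the collision terms for products with several equal $n_i$ is delicate in all three models. Step 2, namely confirming that the $k-1$ two-body conditions really do force every higher collision identity of the true evolution equation, is the integrability heart of the argument and the part most likely to require a fresh computation rather than a direct adaptation of \cite{BCS}.
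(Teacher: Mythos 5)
Your three-step program --- establishing the closed true evolution equations (with the Bernoulli case handled by conditioning on whether the preceding particle moved), reducing to a free evolution equation with $k-1$ two-body boundary conditions plus the $n_i=0$ vanishing condition, and then verifying the nested contour integrand factor-by-factor (separable eigenfunction in each $z_j$, antisymmetrization for the boundary condition, residue calculus for the vanishing and the initial data) --- is exactly the route the paper takes in Sections \ref{truesec}, \ref{freesec} and \ref{checkingformulas}. Your assessment of where the real work lies (the sequential Bernoulli update and the verification that the two-body conditions reproduce all the clustered collision terms, done in the paper via the $C_a(y,s)$ identities of Lemma \ref{prevlem}) is also accurate, so this is essentially the paper's own proof in outline.
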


\begin{figure}
\begin{center}
\includegraphics[scale=1.2]{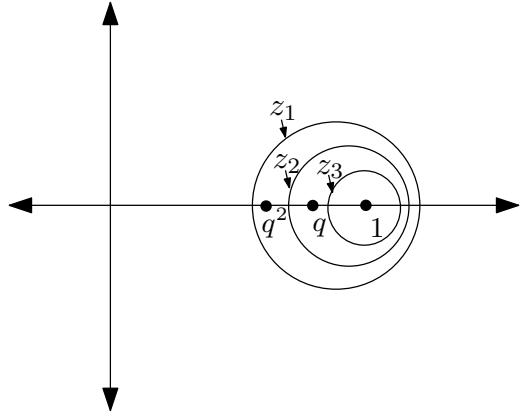}
\end{center}
\caption{Nested contours for $k=3$ and $a_i\equiv 1$.}\label{circontours}
\end{figure}

The remaining sections of this article are concerned with proving the above result (see in particular Section \ref{checkingformulas}).

\begin{remark}\label{arborder}
Consider running an arbitrary combination of the three types of $q$-TASEP with general parameters: Run continuous time Poisson $q$-TASEP for time $\gamma\geq 0$ and discrete time geometric $q$-TASEP with $\alpha_1,\alpha_2,\ldots$ and discrete time Bernoulli $q$-TASEP with $\beta_1,\beta_2,\ldots$. The methods we develop herein imply the following results. The terminal state of the particle system after running combinations of the three dynamics does not depend on the order in which they were run (in fact, one can take turns between running the three different dynamics). Moreover, letting $\vec{x}(t)$ represent the state of the process after having run these three dynamics we find that for all $k\geq 1$ and $\vec{n}\in \Wk$,
\begin{eqnarray*}
\EE \left[ \prod_{i=1}^{k} q^{x_{n_i}(t)+n_i} \right] &=& \frac{(-1)^k q^{\frac{k(k-1)}{2}}}{(2\pi \iota)^k} \int \cdots \int \prod_{1\leq A<B\leq k} \frac{z_A-z_B}{z_A-qz_B} \\
&&\times\,\prod_{j=1}^{k} \left(\prod_{i=1}^{n_j} \frac{a_i}{a_i-z_j}\right) e^{(q-1)\gamma z_j} \prod_{m\geq 1} (1-\alpha_m z_j)\frac{1+q\beta_m z_j}{1+\beta_m z_j} \frac{dz_j}{z_j},
\end{eqnarray*}
where the contour of integration for $z_A$ contains $a_1,\ldots, a_N$, and $\{qz_B\}_{B>A}$ but not 0 or any other poles of the integrand. The above expression is just a product of $\tfrac{f^{\ell}(qz)}{f^{\ell}(z)}$ over $\ell\in \big\{{\rm Poi}, {\rm geo}, {\rm Ber}\big\}$.

The above nested contour integral formulas arise naturally in the study of Macdonald processes (cf. Proposition 3.1.5 of \cite{BorCor} and Corollary 4.7 of \cite{BCGS}) and the existence of the associated integrable quantum many body systems is actually a consequence of this connection (see Section \ref{MacSec} for more details).
\end{remark}

\begin{remark}\label{distrem}
The expectations considered above in Theorem \ref{momthm} uniquely characterize the fixed time joint distribution of $q^{x_1(t)+1},\ldots, q^{x_N(t)+N}$, and hence also that of $\vec{x}(t)$. This is because the $q^{x_i(t)+i}$ are deterministically in $(0,1)$, and thus their moments uniquely identify their distributions \cite{Peter,Kleiber}. In principle it should be possible to extract formulas for any fixed time expectations of $\vec{x}(t)$ from the explicit formulas in Theorem \ref{momthm}. It would be desirable to find moment generating functions (which characterize the joint distributions of $\vec{x}(t)$) for which there are sufficiently compact formulas to allow for asymptotics. Presently it is only known how to do this for the one-point distribution of $x_n(t)$.
\end{remark}

As an application of the above moment formulas it is possible to prove the following Fredholm determinant formula for the $q$-Laplace transform of $q^{x_n(t)+n}$ which in turn characterize the distribution of $x_n(t)$ via a simple inversion formula (see Remark \ref{qinversion}). The procedure for going from the nested contour integral formulas above to the Fredholm determinant formula is developed in \cite{BorCor} and the below result follows immediately from this procedure. For simplicity of contours, we assume here that all $a_i\equiv 1$ (see Remark \ref{notasrem}).

\begin{theorem}\label{distthm}
Fix $q\in (0,1)$ and particle rate parameters $a_i\equiv 1$. Consider the three versions of $q$-TASEP corresponding with $\ell\in \big\{{\rm Poi}, {\rm geo}, {\rm Ber}\big\}$ (in the notation of Theorem \ref{momthm}). Then for all $\zeta\in\C\setminus \R_+$
\begin{equation}\label{MellinBarnes}
\EE^{\ell} \left[\frac{1}{\big(\zeta q^{x_{n(t)+n}};q\big)_{\infty}}\right] = \det(I + K^{\ell}_{\zeta})
\end{equation}
where $\det(I + K^{\ell}_{\zeta})$ is the Fredholm determinant of $K^{\ell}_\zeta: L^2(C_1)\to L^2(C_1)$ for $C_1$ a small positively oriented circle containing 1. The operator $K^{\ell}_\zeta$ is defined in terms of its integral kernel
\begin{equation*}
K^{\ell}_{\zeta}(w,w') = \frac{1}{2\pi \iota} \int_{-\iota \infty + 1/2}^{\iota\infty +1/2} \frac{\pi}{\sin(-\pi s)} (-\zeta)^s \frac{h^{\ell}(q^s w)}{h^{\ell}(w)} \frac{1}{q^s w - w'} ds
\end{equation*}
with
\begin{equation*}
h^{\ell}(w) = (w;q)_{\infty}^n f^{\ell}_t(w)
\end{equation*}
where the function $f^{\ell}_t(w)$ is defined as in Theorem \ref{momthm}.

The following second formula also holds:
\begin{equation}\label{Cauchy}
\EE^{\ell} \left[\frac{1}{\big(\zeta q^{x_{n(t)+n}};q\big)_{\infty}}\right] = \frac{\det(I + \zeta \tilde{K}^{\ell})}{(\zeta;q)_{\infty}}
\end{equation}
where $\det(I + \zeta \tilde{K}^{\ell})$ is the Fredholm determinant of $\zeta$ times the operator $\tilde{K}^{\ell}_\zeta: L^2(C_{0,1})\to L^2(C_{0,1})$ for $C_{0,1}$ a positively oriented circle containing 0 and 1 (and no poles of $f^{\ell}_t(w)$). The operator $\tilde{K}^{\ell}$ is defined in terms of its integral kernel
\begin{equation*}
\tilde{K}^{\ell}(w,w') = \left(\frac{1}{1-w}\right)^n \frac{f^{\ell}_t(qw)}{f^{\ell}_t(w)} \frac{1}{qw'-w}
\end{equation*}
where the function $f^{\ell}_t(w)$ is defined as in Theorem \ref{momthm}.
\end{theorem}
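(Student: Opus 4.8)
The plan is to specialize the moment formula of Theorem~\ref{momthm} to $n_1=\cdots=n_k=n$ with $a_i\equiv 1$, assemble the resulting moments into the $q$-Laplace transform, and then run the passage from nested-contour moment formulas to Fredholm determinants developed in \cite{BorCor}, checking along the way that the functions $f^{\ell}_t$ for $\ell\in\{{\rm Poi},{\rm geo},{\rm Ber}\}$ satisfy the analytic hypotheses of that procedure.

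Concretely, since step initial condition forces $x_n(t)\ge -n$ we have $q^{x_n(t)+n}\in(0,1]$ almost surely, so the $q$-binomial theorem (cf.\ \cite{AAR}) gives, for $|\zeta|<1$,
\[
\EE^{\ell}\!\left[\frac{1}{(\zeta q^{x_n(t)+n};q)_\infty}\right]=\sum_{k\ge 0}\frac{\zeta^k}{(q;q)_k}\,\EE^{\ell}\!\left[q^{k(x_n(t)+n)}\right],
\]
the series converging because every moment is bounded by $1$. Inserting Theorem~\ref{momthm} with all indices equal to $n$ and $a_i\equiv 1$, the $k$-th term is a $k$-fold nested contour integral whose integrand factors through the single function
\[
g^{\ell}(z):=\frac{1}{(1-z)^{n}}\,\frac{f^{\ell}_t(qz)}{f^{\ell}_t(z)}=\frac{h^{\ell}(qz)}{h^{\ell}(z)},\qquad h^{\ell}(w)=(w;q)^n_\infty f^{\ell}_t(w),
\]
using $(qw;q)_\infty/(w;q)_\infty=(1-w)^{-1}$. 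Now one applies the \cite{BorCor} argument: deform the nested $z$-contours to a single small circle $C_1$ around $1$, collecting residues at $z_A=qz_B$; this re-expresses $\EE^{\ell}[q^{k(x_n(t)+n)}]$ as a sum over integer partitions $\lambda\vdash k$, with one $q$-string $(w,qw,\dots,q^{\lambda_i-1}w)$ per part. Summing over $k$ against $\zeta^k/(q;q)_k$ factorizes the partition sum into the Fredholm expansion $\sum_{L\ge 0}\tfrac1{L!}\oint_{C_1}\!\!\cdots\!\!\oint_{C_1}\det[K^{\ell}_\zeta(w_a,w_b)]_{a,b=1}^L\prod_a dw_a$, the geometric-type sum over the length of each $q$-string being resummed into the Mellin--Barnes integral in $K^{\ell}_\zeta$ via the fact that $\pi/\sin(-\pi s)$ has residue $(-1)^j$ at $s=j$. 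This produces \eqref{MellinBarnes}.

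The point that is genuinely specific to the discrete-time models is that the extra factor $f^{\ell}_t(qz)/f^{\ell}_t(z)$ never obstructs these deformations: it is entire for $\ell={\rm Poi}$, equals the polynomial $\prod_{s\le t}(1-\alpha_s z)$ for $\ell={\rm geo}$, and equals $\prod_{s\le t}\frac{1+q\beta_s z}{1+\beta_s z}$ with poles only at $z=-1/\beta_s<0$ for $\ell={\rm Ber}$; in each case one can choose $C_1$ (and the line $\re s=\tfrac12$) to separate $\{1\}$ from all other singularities, and the decay estimates needed for the Fredholm series and the Mellin--Barnes integral to converge hold because $|q^s|\equiv q^{1/2}$ on that line keeps $h^{\ell}(q^s w)/h^{\ell}(w)$ bounded while $\tfrac{\pi}{\sin(-\pi s)}(-\zeta)^s$ decays like $e^{-(\pi-|\arg(-\zeta)|)|\Imag s|}$ for $\zeta\in\C\setminus\R_+$. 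These same uniform bounds give analyticity of both sides of \eqref{MellinBarnes} on $\C\setminus\R_+$ (the left side is analytic there since $(\zeta q^{x_n(t)+n};q)_\infty$ vanishes only for $\zeta\in[1,\infty)$), so the identity, proved first for $|\zeta|<1$, extends to all $\zeta\in\C\setminus\R_+$ by analytic continuation. Finally, \eqref{Cauchy} is obtained either by rerunning the resummation without passing to the Mellin--Barnes representation — the bare sum over partitions already has a Fredholm structure with the simpler Cauchy-type kernel $\tilde K^{\ell}(w,w')=(1-w)^{-n}\frac{f^{\ell}_t(qw)}{f^{\ell}_t(w)}\frac1{qw'-w}$ on a contour $C_{0,1}$ around $0$ and $1$, and the scalar prefactor $(\zeta;q)_\infty^{-1}$ falls out of $\sum_k\zeta^k/(q;q)_k=(\zeta;q)_\infty^{-1}$ — or by collapsing the $s$-contour in $K^{\ell}_\zeta$ onto the poles of $\pi/\sin(-\pi s)$ at $s\in\Z_{\le 0}$.

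I expect the only real work to be this bookkeeping of singularities and estimates for the two new functions $f^{\rm geo}_t$ and $f^{\rm Ber}_t$: one must check that replacing the entire function $e^{tz}$ of the Poisson case by a polynomial, or by a ratio with finitely many negative poles, moves no singularity into the contours used in \cite{BorCor} and does not spoil the trace-class bounds. Since it does neither, the derivation is essentially the same as in the continuous-time case, which is why the statement follows immediately from the established procedure.
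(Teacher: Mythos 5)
Your proposal is correct and follows essentially the same route as the paper: the paper's proof simply invokes the established passage from nested contour integral moment formulas to Mellin--Barnes and Cauchy type Fredholm determinants (Theorems 3.2.10 and 3.2.16 of \cite{BorCor}, via Propositions 3.6 and 3.10 of \cite{BCS}), noting that the technical hypotheses on $f^{\ell}_t$ are verified there for general Macdonald specializations, which already cover all three cases. Your explicit check that $f^{{\rm geo}}_t$ and $f^{{\rm Ber}}_t$ introduce no singularities inside the relevant contours is exactly the verification the paper delegates to those references.
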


\begin{proof}
We refer to the first Fredholm determinant (\ref{MellinBarnes}) as Mellin-Barnes type, and the second Fredholm determinant (\ref{Cauchy}) as Cauchy type. These Mellin-Barnes type and Cauchy type Fredholm determinant formulas were previously proved in the form of Theorems 3.2.10 and 3.2.16 (respectively) of \cite{BorCor}. However, those results are phrased in terms of Macdonald processes (in which $\langle q^{k \lambda_n}\rangle$ corresponds with $\EE[q^{k(x_n(t)+n)}]$). The general scheme which leads to these results is recorded in Sections 3.1 and 3.2 of \cite{BCS} and shows how to go from the nested contour integral formulas for $\EE^{\ell}\left[q^{k(x_n(t)+n)}\right]$ to the (respective) Mellin-Barnes and Cauchy type Fredholm determinant formulas in the theorem. In the application of Propositions 3.6 and 3.10 of \cite{BCS} it is necessary to check certain technical conditions on the function $f$ to make sure that the formal manipulations which lead to the Fredholm determinants are, in fact, numerical identities. These conditions are checked in the proofs of Theorems 3.2.10 and 3.2.16 (respectively) of \cite{BorCor} so we do not repeat them.
\end{proof}

\begin{remark}\label{notasrem}
The only modification of Theorem \ref{distthm} for general $a_i$ is that in the first Fredholm determinant,
$$h^{\ell}(w) = \prod_{i=1}^{n} (w/a_i;q)_{\infty}\, f^{\ell}_t(w),$$
and consequently some care must be taken in specifying the appropriate choice of contours for the $w$ and $s$ integration (so that the $w$ contour contains the $a_1,\ldots, a_n$ while avoiding other poles). See for example, Theorem 3.2.11 of \cite{BorCor}, or Theorem 4.13 of \cite{BorCorFer}. Likewise, in the second Fredholm determinant formula, the term $\left(\tfrac{1}{1-w}\right)^n$ in defining $\tilde K^{\ell}(w,w')$ is replaced by $\prod_{i=1}^{n} \tfrac{a_i}{a_i-w}$ and the contour $C_{0,1}$ is replaced by a suitable one which contains 0 and all $a_1,\ldots, a_n$.
\end{remark}

\begin{remark}\label{qinversion}
The transform from the probability distribution of $q^{x_n(t)+n}$ to the expectation on the left-hand sides of (\ref{MellinBarnes}) and (\ref{Cauchy}) is called a $q$-Laplace transform. Just as the usual Laplace transform can be inverted via a single contour integral in the spectral variable, so too can we recover the distribution of $q^{x_{n}(t)+n}$ or $x_n(t)+n$ from its transform. To state this inversion, write
\begin{equation*}
G^{\ell}(\zeta) = \EE^{\ell} \left[\frac{1}{\big(\zeta q^{x_{n(t)+n}};q\big)_{\infty}}\right].
\end{equation*}
Then it follows from Proposition 3.1.1 of \cite{BorCor} (see also \cite{Banger}) that
\begin{equation*}
\PP^{\ell}\big(x_{n}(t)+n = m\big) = -q^m  \frac{1}{2\pi \iota} \int_{C_m} ( q^{m+1} \zeta;  q)_{\infty}  G^{\ell}(\zeta) d\zeta
\end{equation*}
where $C_m$ is a positively oriented circle which encircles $\{q^{-j}\}_{0\leq j\leq m+1}$.
\end{remark}

\begin{remark}
There are a number of interesting limit theorem results which should be accessible via asymptotic analysis of the Fredholm determinants in Theorem \ref{distthm}. The Mellin-Barnes type Fredholm determinant seems to be most easily analyzed asymptotically. Presently, the only asymptotics related to $q$-TASEP which have been worked out correspond with first taking the limit for $n$ fixed and $q\to 1$ in which $q$-TASEP converges \cite{BCS} to the semi-discrete stochastic heat equation \cite{OY,OCon}. At that level \cite{BorCor,BorCorFer,BCFV} have proved GUE Tracy-Widom limit theorems (see also \cite{BCR} for a related discrete stochastic heat equation \cite{SeppLog}). There remain a number of limit theorems which should be provable from the Fredholm determinant formulas above, such as the GUE Tracy-Widom limit theorems for the three variants of $q$-TASEP. It is possible that in proving such results slightly different choices of contours (as in \cite{BorCorFer,BCFV}) will be necessary.
\end{remark}

\section{True evolution equations}\label{truesec}
The first surprising property of q-TASEP is that the expectations of the observables $\prod_{i=1}^{k} q^{x_{n_i}(t)+n_i}$ evolve according to closed systems of coupled ODEs (in the continuous time case) or coupled difference equations (in the discrete time case). We call these systems the {\it true evolution equations}. This property is not a generic property of interacting particle systems, but rather something quite special to these systems and these choices of observables.

In order to state the true evolution equations it will be useful to have an alternative notation for $\vec{n}\in \Wk$ wherein $y_i$ counts the number of $n_j=i$ (i.e., the size of the cluster of $n_j$'s equal to $i$). More precisely, define
\begin{eqnarray}\label{yn}
Y^N &=& \big\{\vec{y} = (y_0,y_1,\ldots, y_N)\in \Z_{\geq 0}^{N+1}\big\}\\
Y^N_k &=& \big\{\vec{y}\in Y^N : \sum_{i=0}^{N} y_i = k\big\}.
\end{eqnarray}
To each $\vec{n}\in \Wk$ associate $\vec{y}(\vec{n})\in Y^N_k$ via $y_i(\vec{n}) = |\{j: n_j=i\}|$, and to each $\vec{y}\in Y^N_k$ associate $\vec{n}(\vec{y})\in \Wk$ as the unique $\vec{n}$ such that $\vec{y}(\vec{n}) = \vec{y}$. Thus $\vec{y}(\vec{n})$ lists the multiplicities of each number in $\{0,1,\ldots, N\}$ in $\vec{n}$ and $\vec{n}(\vec{y})$ associates to such multiplicities an ordered list in $\Wk$. For instance if $\vec{n}=(4,4,2,1)$ then $\vec{y}(\vec{n}) = (0,1,1,0,2)$. We say that $\vec{n}$ has $\ell$ clusters if $\ell = |\{i:y_{i}(\vec{n}) >0\}|$. For the present example $\vec{n}=(4,4,2,1)$ has three clusters.

For a vector $\vec{y}\in Y_k^N$ and a vector $\vec{s} = (s_1,\ldots, s_N)$ with $0\leq s_i\leq y_i$ define
\begin{eqnarray*}
\vec{y}^{\,s_i}_{i,i-1} &=& (y_0,\ldots, y_{i-1}+s_i,y_{i}-s_i,\ldots, y_N)\\
\vec{y}^{\,\vec{s}} &=& (y_0+s_1,y_1+s_2-s_1,y_2+s_3-s_2,\ldots, y_{N-1}+s_N-s_{N-1},y_{N}-s_N)
\end{eqnarray*}
Both of these map $\vec{y}$ into another element of $Y^N_k$.

The following combinatorial coefficients are used in defining the true evolution equation.
\begin{definition}
For $a\in \R$, $q\in (0,1)$, $y\in \Z_{\geq 0}$ and integers $1\leq s\leq y$ define
\begin{equation}\label{Cas}
C_{a}(y,s) = (-a)^s (-a;q)_{y-s} \frac{(q;q)_{y}}{(q;q)_{y-s}(q;q)_{s}},
\end{equation}
with the convention that $C_{a}(y,s)=0$ for $s<0$ or $s>y$.
\end{definition}

We record the following easily checkable properties of the coefficients $C_{a}(y,s)$ which will prove useful:
\begin{eqnarray}\label{Cid}
\nonumber(q^s-1)(1+ a q^{y-s}) C_{a}(y,s) &=& a(1-q^{y+1-s})C_{a}(y,s-1) \\
C_{a}(y+1,s) &=& (1+a q^{y-s}) C_{a}(y,s) - a q^{y+1-s} C_{a}(y,s-1)\\
\nonumber C_{qa}(y,s) &=& \frac{1}{1+a}\left( (1+a q^{y-s}) C_{a}(y,s) + a (1-q^{y+1-s}) C_{a}(y,s-1)\right) .
\end{eqnarray}

\begin{definition}
For $a>0$, $1\leq i\leq N$ and $\vec{y}\in Y^N$ define the following difference operators
\begin{equation*}
(\LL{a}{i} f)(\vec{y}) = a(1-q^{y_i}) \big(f(\vec{y}_{i,i-1}^{\,1})-f(\vec{y})\big), \qquad (\A{a}{i} f)(\vec{y}) = \sum_{s_i=0}^{y_i} C_{a}(y_i,s_i) f(\vec{y}_{i,i-1}^{\,s_i}).
\end{equation*}
\end{definition}

\begin{remark}\label{zeroremark}
If $f(\vec{y})=0$ for $\vec{y}$ with $y_0>0$, then it follows that
\begin{equation*}
(\LL{a}{1} f)(\vec{y}) = a(1-q^{y_1}) \big(-f(\vec{y})\big), \qquad (\A{a}{1} f)(\vec{y}) = C_{a}(y_1,0) f(\vec{y}).
\end{equation*}
Moreover, for such an $f$, we have that
\begin{equation}\label{rhsexp}
\A{a_1}{1}\cdots \A{a_N}{N} f(\vec{y}) = C_{a_1}(y_1,0) \sum_{s_2=0}^{y_2} C_{a_2}(y_2,s_2) \cdots \sum_{s_N=0}^{y_N} C_{a_N}(y_N,s_N) f(\vec{y}^{\,\vec{s}})
\end{equation}
with $s_1=0$.
\end{remark}

\begin{definition}\label{truedef}
We say that $h(t;\vec{y})$ solves the (rate parameter $a_1,\ldots, a_N$) continuous time Poisson $q$-TASEP / discrete time Geometric $q$-TASEP (with time dependent jump parameters $\alpha_1,\alpha_2,\ldots~\in~(0,1)$) / discrete time Bernoulli $q$-TASEP (with time dependent jump parameters $\beta_1,\beta_2,\ldots~\in~(0,\infty)$) {\it true evolution equation} with initial data $h_0(\vec{y})$ if
\begin{enumerate}
\item For all $\vec{y}\in Y^N$ and $t\geq 0$,
\begin{eqnarray*}
\frac{d}{dt} h(t;\vec{y}) &=& \sum_{i=1}^{N} \LL{a_i}{i} h(t;\vec{y}) \qquad\qquad\qquad\qquad\qquad\textrm{(Poisson)},\\
h(t+1;\vec{y}) &=& \A{-a_1 \alpha_{t+1}}{1} \cdots \A{-a_N \alpha_{t+1}}{N} h(t;\vec{y}) \qquad\quad\textrm{(geometric)},\\
\A{a_1 \beta_{t+1}}{1} \cdots \A{a_N \beta_{t+1}}{N} h(t+1;\vec{y}) &=& \A{q a_1 \beta_{t+1}}{1} \cdots \A{q a_N \beta_{t+1}}{N} h(t;\vec{y})\qquad\quad\,\textrm{ (Bernoulli)};
\end{eqnarray*}
\item For all $\vec{y}\in Y^N$ such that $y_0>0$, $h(t;\vec{y})\equiv 0$ for all $t\geq 0$;
\item For all $\vec{y}\in Y^N$, $h(0;\vec{y}) = h_0(\vec{y})$.
\end{enumerate}
\end{definition}

\begin{lemma}\label{existuniq}
The above respective true evolution equations have unique solutions.
\end{lemma}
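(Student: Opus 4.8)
The plan is to observe that each true evolution equation decouples, over the mass levels $k=\sum_i y_i$, into a finite-dimensional linear problem, and then to solve the three cases with the appropriate elementary tool: linear ODE theory (Poisson), forward iteration (geometric), and inversion of a triangular operator (Bernoulli). First I would decompose $Y^N=\bigsqcup_{k\geq 0}Y^N_k$ and note that each of the operators $\LL{a}{i}$ and $\A{a}{i}$ sends functions supported on $Y^N_k$ to functions supported on $Y^N_k$, since it only redistributes the mass $\sum_i y_i=k$ among clusters; as each $Y^N_k$ is a finite set, it suffices to prove existence and uniqueness on a fixed $Y^N_k$. Condition (2) of Definition \ref{truedef}, applied at $t=0$, forces the initial data to vanish on $\{\vec y:y_0>0\}$, so it may be regarded as an element $h_0$ of the finite-dimensional space $V_k$ of functions on $Y^N_k$ vanishing on $\{y_0>0\}$. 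Using Remark \ref{zeroremark} I would check that every operator appearing on the right-hand side of the three equations maps $V_k$ to $V_k$: for $\A{a}{1}$ this is exactly the collapse onto the $s_1=0$ term recorded in that remark, for $\LL{a}{1}$ one finds $(\LL{a}{1}f)(\vec y)=-a(1-q^{y_1})f(\vec y)$ on $\{y_0=0\}$, and the operators with index $\geq2$ leave $y_0$ untouched. Hence the $\{y_0>0\}$ part of each equation is the identity $0=0$, and the full system is equivalent to its restriction to $\{y_0=0\}$, i.e.\ to an equation for a trajectory $t\mapsto h(t)\in V_k$ with $h(0)=h_0$.

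For the Poisson case this restriction is a constant-coefficient linear ODE $\tfrac{d}{dt}h(t)=Lh(t)$ on the finite-dimensional space $V_k$, where $L$ is the restriction of $\sum_{i=1}^N\LL{a_i}{i}$, so standard ODE theory gives the unique solution $h(t)=e^{tL}h_0$. For the geometric case the equation already reads $h(t+1)=\A{-a_1\alpha_{t+1}}{1}\cdots\A{-a_N\alpha_{t+1}}{N}h(t)$, so forward iteration from $h_0$ produces the unique solution, which remains in $V_k$ by the first paragraph. The Bernoulli case needs one additional input: there $h(t+1)$ is only implicitly determined by $\Phi_{t+1}h(t+1)=\Psi_{t+1}h(t)$, where $\Phi_{t+1}=\A{a_1\beta_{t+1}}{1}\cdots\A{a_N\beta_{t+1}}{N}$ and $\Psi_{t+1}=\A{qa_1\beta_{t+1}}{1}\cdots\A{qa_N\beta_{t+1}}{N}$, so I would show $\Phi_{t+1}$ is invertible on $V_k$. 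Each factor $\A{a}{i}$ acts only on the pair $(y_{i-1},y_i)$, moving mass from cluster $i$ into cluster $i-1$; ordering states within each such two-site slice by $y_i$, it is therefore triangular with diagonal entries $C_a(y_i,0)=(-a;q)_{y_i}=\prod_{l=0}^{y_i-1}(1+aq^l)$, which are strictly positive because $a=a_i\beta_{t+1}>0$. So each $\A{a}{i}$ is invertible, hence so is the composition $\Phi_{t+1}$, and since it preserves the finite-dimensional $V_k$ it is invertible there; then $h(t+1)=\Phi_{t+1}^{-1}\Psi_{t+1}h(t)\in V_k$ and forward iteration again gives existence and uniqueness.

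The only genuine obstacle is the invertibility of $\Phi_{t+1}$ in the Bernoulli case, and once the triangular structure and the value $C_a(y,0)=(-a;q)_y$ (read off from (\ref{Cas})) are noted, the rest is routine finite-dimensional linear algebra and ODE theory; the remaining checks --- that $\Phi_{t+1}^{-1}$ maps $V_k$ into $V_k$ because $\Phi_{t+1}$ is a bijection of $V_k$, and that the trajectories built above satisfy the full system on all of $Y^N$ by the consistency remark in the first paragraph --- are immediate.
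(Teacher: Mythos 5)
Your argument is correct and follows essentially the same route as the paper: decompose over the finite sets $Y^N_k$, exploit the triangular structure of the operators with respect to the partial sums $y_i+\cdots+y_N$, and finish with standard ODE theory / finite-dimensional linear algebra. You are in fact more explicit than the paper about the one genuinely nontrivial point, namely the invertibility of the left-hand operator in the Bernoulli case, which you justify via triangularity with positive diagonal entries $C_{a_i\beta_{t+1}}(y_i,0)=(-a_i\beta_{t+1};q)_{y_i}>0$.
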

\begin{proof}
The operators $\LL{a}{i}$ and $\A{a}{i}$ map the space of functions $f:Y^N\to \R$ onto itself. In fact, these operators restrict to mapping the space of functions $f:Y^N_k\to \R$ onto itself. On account of this, the above true evolution equations restrict to a collection of closed systems of linear ODEs (for the continuous time case) or difference equations (for the two discrete time cases), indexed by $k$. It suffices, therefore, to prove existence and uniqueness when restricted to the system corresponding to each $k\geq 1$.

For fixed $k$, our system of linear differential or difference equations is also triangular in the following sense. Define a partial ordering on $Y^N_k$ so that $y'\leq y$ if for all $0\leq i\leq N$, $y'_i+\cdots +y'_N\leq y_i+\cdots +y_N$. Then triangularity means for the continuous time case that $\frac{d}{dt} h(t;\vec{y})$ depends only upon $h(t,\vec{y}')$ for $\vec{y}' \leq \vec{y}$, and for the discrete time cases that $h(t+1;\vec{y})$ depends only upon $h(t,\vec{y}')$ for $\vec{y}' \leq \vec{y}$. This is easily seen from the definition of the true evolution equations.

Finally, for each $k\geq 1$, the associated closed, triangular system of linear ODEs / difference equations is also finite. One account of this, one can apply standard methods (such as in \cite{Cod} for the continuous time case or linear algebra for the discrete time cases) to conclude the existence and uniqueness of solutions.
\end{proof}

We may now state the main result of this section, which is that expectations of certain observables of the three variants of $q$-TASEP solve the above (respective) true evolution equations.

\begin{theorem}\label{truethm}
Consider the (rate parameter $a_1,\ldots, a_N$) continuous time Poisson $q$-TASEP / discrete time Geometric $q$-TASEP (with time dependent jump parameters $\alpha_1,\alpha_2,\ldots\in (0,1)$) / discrete time Bernoulli $q$-TASEP (with time dependent jump parameters $\beta_1,\beta_2,\ldots\in (0,\infty)$) started from an arbitrary initial condition $\vec{x}(0) = \vec{x}$. Then for any $k\geq 1$ and $\vec{n}\in \Wk$,
\begin{equation}\label{truelhs}
\EE\left[\prod_{i=0}^k q^{x_{n_i}(t)+n_i}\right]  = h(t;\vec{y}(\vec{n}))
\end{equation}
where $h(t;\vec{y})$ solves the (respective) true evolution equation with initial data
\begin{equation*}
h_0(\vec{y}(\vec{n})) = \EE\left[\prod_{i=0} q^{x_{n_i}+n_i}\right].
\end{equation*}
\end{theorem}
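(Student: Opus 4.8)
The plan is to verify that the function $h(t;\vec{y}):=\EE\big[F_{\vec{y}}(\vec{x}(t))\big]$, with $F_{\vec{y}}(\vec{x}):=\prod_{m=0}^{N}(q^{x_m+m})^{y_m}$, satisfies the three conditions of Definition~\ref{truedef}; since $\prod_{i=1}^{k}q^{x_{n_i}(t)+n_i}=F_{\vec{y}(\vec{n})}(\vec{x}(t))$, this is exactly the assertion. Conditions (3) and (2) are immediate: (3) merely names the initial data, while for (2) the convention $x_0(t)\equiv+\infty$ together with $q\in(0,1)$ forces $q^{x_0(t)}=0$, so $F_{\vec{y}}\equiv 0$ and hence $h\equiv 0$ whenever $y_0>0$. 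Everything is in condition (1), which I would check separately for the three dynamics by conditioning on $\vec{x}(t)$, using the Markov property, and reducing to an algebraic identity in the variables $q^{\gap{i}(t)}$. Two elementary facts power the reduction. First, increasing the $i$-th coordinate of $\vec{x}$ by one multiplies $F_{\vec{y}}$ by $q^{y_i}$. Second, there is the \emph{shift identity}
\begin{equation*}
\Big(\prod_{i=1}^{N}q^{s_i\gap{i}(\vec{x})}\Big)F_{\vec{y}}(\vec{x})=F_{\vec{y}^{\,\vec{s}}}(\vec{x}),\qquad 0\le s_i\le y_i,
\end{equation*}
which follows by telescoping $\gap{i}=x_{i-1}-x_i-1$ through the exponents (the case $s_i=1$, $s_j=0$ for $j\neq i$, being $q^{\gap{i}(\vec{x})}F_{\vec{y}}(\vec{x})=F_{\vec{y}^{\,1}_{i,i-1}}(\vec{x})$). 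Feeding the shift identity into the expansion (\ref{rhsexp}) of $\A{a_1}{1}\cdots\A{a_N}{N}$ — valid here since $h$ vanishes for $y_0>0$ — rewrites the right-hand sides of the geometric and Bernoulli equations as the expectation of $F_{\vec{y}}(\vec{x}(t))$ against $\prod_{i=1}^{N}\big(\sum_{s_i=0}^{y_i}C_{a}(y_i,s_i)q^{s_i\gap{i}(t)}\big)$, the $i=1$ factor collapsing to $C_{a}(y_1,0)$ because $\gap{1}\equiv+\infty$.

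The Poisson case then drops out: applying the generator gives $\tfrac{d}{dt}h(t;\vec{y})=\EE\big[\sum_{i}a_i(1-q^{\gap{i}(t)})(q^{y_i}-1)F_{\vec{y}}(\vec{x}(t))\big]$, and using $(1-q^{\gap{i}(t)})F_{\vec{y}}(\vec{x}(t))=F_{\vec{y}}(\vec{x}(t))-F_{\vec{y}^{\,1}_{i,i-1}}(\vec{x}(t))$ turns the right-hand side into $\sum_{i}\LL{a_i}{i}h(t;\vec{y})$. For the geometric case the parallel updates are independent, so the one-step conditional expectation factors over particles,
\begin{equation*}
\EE\big[F_{\vec{y}}(\vec{x}(t+1))\mid\vec{x}(t)\big]=F_{\vec{y}}(\vec{x}(t))\prod_{m}\Big(\sum_{j}q^{y_m j}\,\pp{\gap{m}(t),\,a_m\alpha_{t+1}}(j)\Big),
\end{equation*}
and after taking expectations it remains to match this, factor by factor, against the product displayed above; since $\pp{g,\alpha}(j)=C_{-\alpha}(g,j)$, this reduces to the $q$-binomial identity $\sum_{j}q^{yj}C_{-\alpha}(g,j)=\sum_{s}q^{sg}C_{-\alpha}(y,s)$, i.e.\ the symmetry of $\sum_{j}q^{yj}\pp{g,\alpha}(j)$ under $y\leftrightarrow g$ (the $g=+\infty$ instance, needed for particle $1$, being the $q$-binomial theorem), provable by induction in the manner of Lemma~\ref{sumone} from the recursions (\ref{Cid}).

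The Bernoulli case is the main obstacle. Now the jump indicators $(b_1,\dots,b_N)$ form an inhomogeneous Markov chain whose transition at site $i$ depends both on $q^{\gap{i}(t)}$ and on whether $b_{i-1}=1$, so that the one-step conditional expectation of $F_{\vec{y}}(\vec{x}(t+1))$ — together with the factor coming from $\A{a_1\beta_{t+1}}{1}\cdots\A{a_N\beta_{t+1}}{N}$, which is now evaluated at $\vec{x}(t+1)$ and involves $\gap{i}(t+1)=\gap{i}(t)+b_{i-1}-b_i$ — no longer factors over particles. I would reduce the Bernoulli equation to the identity obtained this way, valid pointwise in $\vec{x}(t)$ as a rational identity in the $q^{\gap{i}(t)}$, and prove it by induction on $N$, peeling off the particles in the order $1,2,\dots,N$ so as to mirror the sequential structure of both the update rule and the operators $\A{\cdot}{i}$. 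The inductive step is exactly the third relation in (\ref{Cid}): its prefactor $\tfrac{1}{1+a}$ matches the normalization $\tfrac{1}{1+a_i\beta_{t+1}}$ of the Bernoulli jump probabilities, and its coefficients $1+aq^{y-s}$ and $a(1-q^{y+1-s})$ encode the two conditional jump laws according to whether particle $i-1$ moved. Correctly threading this conditioning through the induction — carrying along the event that the preceding particle jumped — is the delicate bookkeeping, and it is the place where the relation between $C_{qa}$ and $C_a$ does its work.
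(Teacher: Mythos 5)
Your proposal is correct and follows essentially the same route as the paper's: check conditions (2)--(3) directly, handle the Poisson case through the generator, the geometric case through the factorized one-step conditional expectation together with the identity $\sum_{j}q^{yj}\pp{m,a}(j)=\sum_{s}C_{-a}(y,s)q^{sm}$ (the paper's Lemma \ref{cqm}, which you nicely recast as a symmetry in $m\leftrightarrow y$ and propose to prove by recursion rather than the paper's route through Lemma \ref{sumone} and a known $q$-binomial identity), and the Bernoulli case through sequential conditioning on the jump indicators, which is exactly the mechanism of the paper's Lemma \ref{bernlem}. The one small correction to your Bernoulli sketch is that the one-particle step uses both the third relation of (\ref{Cid}), which converts $C_{a_i\beta}$ into $C_{qa_i\beta}$ on the contributions where particle $i-1$ stayed, and the first relation, which is what makes the contributions carrying the indicator that particle $i-1$ jumped cancel (so the result is independent of that indicator); attributing both cases to the third relation alone is the only inaccuracy, and with that bookkeeping your induction over particles coincides with the paper's proof.
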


\begin{proof}
Due to Lemma \ref{existuniq} it suffices to show that the left-hand side of (\ref{truelhs}) satisfies the true evolution equation (in each of the three cases).

In what follows let $\vec{y}=\vec{y}(\vec{n})$. It is then convenient to rewrite the left-hand side of (\ref{truelhs}) as
\begin{equation*}
\EE\left[\prod_{i=0}^k q^{x_{n_i}(t)+n_i}\right]= \EE\left[\prod_{i=0}^{N} q^{(x_i(t)+i)y_i}\right]
\end{equation*}
with the convention that when $y_0>0$, the above is zero, and when $y_0=0$, the above product starts at $i=1$. From this one readily sees that condition (2) of the true evolution equation is satisfied. Condition (3) is immediate as well.

We prove condition (1) for each of the three cases. In fact, the Poisson case was previously proved in \cite{BCS}, though we include it here as well for completeness.\newline

\noindent {\bf Poisson case}: Let $L$ denote the generator of continuous time Poisson $q$-TASEP. Then it follows that
\begin{eqnarray*}
\frac{d}{dt} \EE\left[\prod_{i=0}^{N} q^{(x_i(t)+i)y_i}\right] &=& L \EE\left[ \prod_{i=0}^{N} q^{(x_i(t)+i)y_i}\right]= \EE\left[L \prod_{i=0}^{N} q^{(x_i(t)+i)y_i}\right]\\
&=& \EE\left[\sum_{j=1}^{N} a_j(1-q^{x_{j-1}(t)-x_{j}(t)-1})\big(q^{(x_j(t)+1+j)y_j}-q^{(x_j(t)+j)y_j}\big) \prod_{\substack{i=0\\i\neq j}}^{N} q^{(x_i(t)+i)y_i}\right]\\
&=& \EE\left[\sum_{j=1}^{N} a_j(1-q^{x_{j-1}(t)-x_{j}(t)-1})(q^{y_j}-1) \prod_{i=0}^{N} q^{(x_i(t)+i)y_i}\right]\\
&=& \EE\left[\sum_{j=1}^{N} \LL{a_j}{j} \prod_{i=0}^{N} q^{(x_i(t)+i)y_i}\right] = \sum_{j=1}^{N} \LL{a_j}{j} \EE\left[\prod_{i=0}^{N} q^{(x_i(t)+i)y_i}\right].
\end{eqnarray*}
This shows that the expectation in question does satisfy the true evolution equation.

\noindent {\bf Geometric case}: We show the following stronger statement. Let $\sigma^N_t$ be the sigma-field generated by the random variables $x_1(t),\ldots, x_N(t)$. Then, as $\sigma^N_t$ measurable random variables
\begin{equation*}
\EE\left[\prod_{i=0}^N q^{(x_{i}(t+1)+i)y_j} \big| \sigma^N_t\right]  =  \A{-a_1 \alpha_{t+1}}{1} \cdots \A{-a_N \alpha_{t+1}}{N} \prod_{i=0}^{N} q^{(x_i(t)+i)y_i}.
\end{equation*}
Taking expectations of both sides (with respect to the random variables $\vec{x}(t)$) recovers the desired result. In what follows we write $\alpha$ instead of $\alpha_{t+1}$.

In order to show the above result, observe that it follows immediately from the geometric dynamics that
\begin{equation}\label{geolhs}
\EE\left[\prod_{i=0}^N q^{(x_{i}(t+1)+i)y_j} \big| \sigma^N_t\right]  =\prod_{i=1}^{N} \left(\sum_{j=0}^{\gap{i}(t)} \pp{\gap{i}(t), a_i \alpha}(j)  q^{jy_i}\right) \prod_{i=0}^{N} q^{(x_i(t)+i)y_i}.
\end{equation}

\begin{lemma}\label{cqm}
For all $m,y \geq 0$ and $a>0$
\begin{equation}\label{lemmalhs}
\sum_{j=0}^{m} \pp{m, a}(j)  q^{jy} = \sum_{s=0}^{y} C_{-a}(y,s) q^{sm},
\end{equation}
and likewise
\begin{equation*}
\sum_{j=0}^{\infty} \pp{\infty, a}(j)  q^{jy} = C_{-a}(y,0).
\end{equation*}
\end{lemma}

Applying this lemma we find
\begin{eqnarray*}
\textrm{LHS (\ref{geolhs})} &=& C_{-a_1\alpha}(y_1,0) \sum_{s_2=0}^{y_2} C_{-a_2\alpha}(y_2,s_2) q^{\gap{2}(t) s_2} \cdots \sum_{s_N=0}^{y_N} C_{-a_N\alpha}(y_N,s_N) q^{\gap{N}(t) s_N}
\prod_{i=0}^{N} q^{(x_i(t)+i)y_i}\\
&=& C_{-a_1\alpha}(y_1,0) \sum_{s_2=0}^{y_2} C_{-a_2\alpha}(y_2,s_2) \cdots \sum_{s_N=0}^{y_N} C_{-a_N\alpha}(y_N,s_N)
\prod_{i=0}^{N} q^{(x_i(t)+i)(y_i+s_{i+1}-s_i)}
\end{eqnarray*}
with the convention $s_0=s_1=s_{N+1}=0$. The second line follows from the definition of $\gap{i}(t)$. Thus, we have reached a formula for the left-hand side of (\ref{geolhs}) which, in light of equation (\ref{rhsexp}) matches the right-hand side of (\ref{geolhs}).

\begin{proof}[Proof of Lemma \ref{cqm}]
We will prove equation (\ref{lemmalhs}). The $m=+\infty$ case is in the same spirit so we leave it out. Taking $m$ fixed, denote the left-hand side of (\ref{lemmalhs}) as $\mu_y$.

We may apply Lemma \ref{sumone} with the parameter $\alpha$ from its statement taken to be  $q^y a$. This yields
\begin{equation*}
\sum_{j=0}^{m} \pp{m,q^y a}(j) = 1.
\end{equation*}
From the definition of $\pp{m,q^y a}(j)$ this equality can be rewritten as
\begin{equation}\label{secondset}
\frac{1}{(a;q)_y} \sum_{j=0}^{m} q^{yj} \pp{m,a}(j) (q^{m-j}a;q)_y = 1.
\end{equation}
We may expand the product $(q^{m-j}a;q)_y$ as
\begin{equation*}
(q^{m-j}a;q)_y = \sum_{r=0}^{y} (-q^{m-j}a)^r e_r(1,q,\ldots, q^{y-1}).
\end{equation*}
where $e_r$ is the degree $r$ elementary symmetric polynomial (see e.g. Section I.2 of \cite{M}).
Plugging this into (\ref{secondset}) and rearranging the summations of $j$ and $r$ yields
\begin{equation*}
\frac{1}{(a;q)_y} \sum_{r=0}^{y} (-a)^r q^{mr} e_r(1,q,\ldots, q^{y-1}) \sum_{j=0}^{m} q^{(y-r)j} \pp{m,a}(j) = 1.
\end{equation*}
Rewriting in terms of the $\mu_y$, we find
\begin{equation}\label{aboverelation}
\frac{1}{(a;q)_y} \sum_{r=0}^{y} (-a)^r q^{mr} e_r(1,q,\ldots, q^{y-1}) \mu_{y-r} = 1.
\end{equation}
Note also that (see e.g. Section I.3, exercise 1 of \cite{M})
\begin{equation*}
e_r(1,q,\ldots, q^{y-1}) = q^{\frac{r(r-1)}{2}} \frac{(q;q)_y}{(q;q)_r (q;q)_{y-r}}.
\end{equation*}

The relation (\ref{aboverelation}) on the $\mu_y$ uniquely characterizes them, hence it suffices to check that the right-hand side of (\ref{lemmalhs}) also satisfies the relation when substituted for the $\mu_y$. This amounts to
\begin{equation}\label{aboverelation1p5}
\frac{1}{(a;q)_y} \sum_{r=0}^{y} (-a)^r q^{mr} q^{\frac{r(r-1)}{2}} \frac{(q;q)_y}{(q;q)_r (q;q)_{y-r}} \sum_{i=0}^{y-r} C_{-a}(y-r,i) q^{im} = 1.
\end{equation}
In order to check that the above relation holds we can gather all coefficients associated with $q^{(r+i)m}$ and check that for $r+i=0$ the coefficients combine to equal 1, and for $r+i>0$ they combine to 0. Letting $n=r+i$, we must therefore check that
\begin{equation*}
\frac{(q;q)_y}{(a;q)_y} \sum_{r=0}^{y} (-a)^r \frac{q^{\frac{r(r-1)}{2}}}{(q;q)_r (q;q)_{y-r}}  C_{-a} (y-r,n-r)  = \begin{cases} 1& n=0,\\ 0 & n>0.\end{cases}
\end{equation*}
Using the definition of the $C$ coefficients the above relation reduces to
\begin{equation}\label{aboverelation2}
\frac{(q;q)_y}{(q;q)_{y-n}} \frac{(a;q)_{y-n}}{(a;q)_y} a^n \sum_{r=0}^{n} (-1)^r \frac{q^{\frac{r(r-1)}{2}}}{(q;q)_r (q;q)_{n-r}}= \begin{cases} 1& n=0,\\ 0 & n>0.\end{cases}
\end{equation}
From Corollary 10.2.2(c) of \cite{AAR} we find that
\begin{equation*}
\sum_{r=0}^{n} (-1)^r \frac{q^{\frac{r(r-1)}{2}}}{(q;q)_r (q;q)_{n-r}} = \begin{cases} 1& n=0,\\ 0 & n>0,\end{cases}
\end{equation*}
from which (\ref{aboverelation2}) immediately follows.
\end{proof}

\noindent {\bf Bernoulli case}: We will show the following stronger statement. For $j\geq 1$ and $t\geq 0$, let $\sigma^j_t$ the sigma-field generated by the random variables $x_{1}(t),\ldots, x_j(t)$. Then, as $\sigma^N_t$ measurable random variables
\begin{equation}\label{berncond}
\A{a_1\beta_{t+1}}{1} \cdots \A{a_N\beta_{t+1}}{N} \EE\left[\prod_{i=0}^{N} q^{(x_i(t+1)+i)y_i} \big\vert \sigma^N_t\right]
= \A{qa_1\beta_{t+1}}{1} \cdots \A{qa_N\beta_{t+1}}{N} \prod_{i=0}^{N} q^{(x_i(t+1)+i)y_i}.
\end{equation}
Taking expectations of both sides (with respect to the random variables $\vec{x}(t)$) recovers the desired result. In what follows we write $\beta$ instead of $\beta_{t+1}$.

The following lemma will quickly yield a proof of (\ref{berncond}).

\begin{lemma}\label{bernlem}
For $i=2,\ldots, N$,
\begin{eqnarray*}
\sum_{s_i=0}^{y_i} C_{a_i\beta}(y_i,s_i)\EE\left[ q^{(x_i(t+1)+i)(y_i-s_i)}q^{(x_{i-1}(t+1)+i-1)s_i} \big\vert \sigma^N_t, \sigma^{i-1}_{t+1}\right]\qquad\qquad\qquad\qquad&\\
= \sum_{s_i=0}^{y_i} C_{qa_i\beta} (y_i,s_i) q^{(x_i(t)+i)(y_i-s_i)}q^{(x_{i-1}(t)+i-1)s_i},\qquad\qquad\qquad\qquad\qquad&
\end{eqnarray*}
and for $i=1$
\begin{equation*}
C_{a_1\beta}(y_1,0)\EE\left[ q^{(x_1(t+1)+1)y_{1}} \big\vert \sigma^N_t\right] = C_{qa_1\beta} (y_1,0) q^{(x_1(t)+1)y_1}.
\end{equation*}
\end{lemma}
\begin{proof}
Let us first consider the $i=1$ case. Given the knowledge of $x_{1}(t)$, the dynamics of Bernoulli $q$-TASEP implies that with probability $\frac{a_1\beta}{1+a_1\beta}$, we have that $x_{1}(t+1)=x_{1}(t)+1$ and with probability $\frac{1}{1+\beta}$, we have that $x_{1}(t+1)=x_1(t)$. This implies that
\begin{eqnarray*}
C_{a_1\beta}(y_1,0)\EE\left[ q^{(x_1(t+1)+1)y_{1}} \big\vert \sigma^N_t\right] &=&  C_{a_1\beta}(y_1,0) q^{(x_1(t)+1)y_1}\left(\frac{a_1\beta}{1+a_1\beta} q^{y_1} + \frac{1}{1+a_1\beta}\right) \\
&=& C_{qa_1\beta} (y_1,0) q^{(x_1(t)+1)y_1},
\end{eqnarray*}
as desired. Here we used the third relation of (\ref{Cid}) with $s=0$ to reach the above conclusion.

Now consider the case when $i=2,\ldots, N$. Let $I_{i-1}$ represent the indicator function for the event that $x_{i-1}(t+1)=x_{i-1}(t)+1$ (i.e., particle $i-1$ jumped at time $t+1$). Note that this is measurable with respect to $\sigma^N_t,\sigma^{i-1}_{t+1}$. Thus, by virtue of the Bernoulli dynamics, we have that for $r,s\geq 0$
\begin{align*}
&\EE\left[ q^{(x_i(t+1)+i)r}q^{(x_{i-1}(t+1)+i-1)s} \big\vert \sigma^N_t, \sigma^{i-1}_{t+1}\right]\\
&=  I_{i-1} q^{(x_i(t)+i)r} q^{(x_{i-1}(t)+1+i-1)s} \left(\frac{a_i\beta}{1+a_i\beta} q^r + \frac{1}{1+a_i\beta}\right)\\
&\quad   + (1-I_{i-1}) q^{(x_i(t)+i)r} q^{(x_{i-1}(t)+i-1)s} \left((1-q^{x_{i-1}(t)-x_i(t)-1})\frac{a_i\beta}{1+a_i\beta} q^r + \frac{1+a_i\beta q^{x_{i-1}(t)-x_i(t)-1}}{1+a_i\beta}\right)\\
&= q^{(x_i(t)+i)r} q^{(x_{i-1}(t)+i-1)s} \frac{1}{1+a_i\beta} \left(I_{i-1} a_i\beta q^{r+s} + I_{i-1} q^s + (1-I_{i-1}) q^r + (1-I_{i-1})\right)\\
&\quad   + q^{(x_i(t)+i)(r-1)} q^{(x_{i-1}(t)+i-1)(s+1)} \frac{1}{1+a_i\beta} \left(a_i\beta(1-I_{i-1})(1-q^r)\right).
\end{align*}
Using the above, we may now compute
\begin{align*}
&\sum_{s_i=0}^{y_i} C_{a_i \beta}(y_i,s_i)\EE\left[ q^{(x_i(t+1)+i)(y_i-s_i)}q^{(x_{i-1}(t+1)+i-1)s_i} \big\vert \sigma^N_t, \sigma^{i-1}_{t+1}\right] \\
&= \sum_{s_i=0}^{y_i} q^{(x_i(t)+i)(y_i-s_i)}q^{(x_{i-1}(t)+i-1)s_i} \\
&\qquad\qquad\times \, \bigg(C_{a_i\beta}(y_i,s_i)\frac{1}{1+a_i\beta}\left(I_{i-1}a_i\beta q^{y_i} + I_{i-1} q^{s_i} + (1-I_{i-1})a_i\beta q^{y_i-s_i} + (1-I_{i-1})\right) \\
&\qquad\qquad\qquad\qquad\qquad\qquad\qquad\qquad\qquad+ C_{a_i\beta}(y_i,s_i-1) \frac{a_i\beta}{1+a_i\beta}(1-I_{i-1})(1-q^{y_i-s_i+1})\bigg).
\end{align*}
Observe that we may rewrite the above factor (in large parentheses) as
\begin{align*}
&\frac{1}{1+a_i\beta} \Big((1+a_i\beta q^{y_i-s_i}) C_{a_i\beta}(y_i,s_i) + a_i\beta(1-q^{y_i-s_i+1}) C_{a_i\beta}(y_i,s_i-1)\Big) \\
&\quad+ \frac{I_{i-1}}{1+a_i\beta} \Big(C_{a_i\beta}(y_i,s_i)(a_i\beta q^{y_i} + q^{s_i} - a_i\beta q^{y_i-s_i} -1) - C_{a_i\beta}(y_i,s_i-1)a_i\beta(1-q^{y_i-s_i+1})\Big)\\
&= C_{qa_i\beta}(y_i,s_i).
\end{align*}
This last equality came from two facts. The left-hand side has two terms. The first is equal to $C_{qa_i\beta}(y_i,s_i)$ by the third relation of (\ref{Cid}). The second term (with $I_{i-1}$) is zero by the first relation of (\ref{Cid}).

As a consequence of the above calculation we find that
\begin{align*}
&\sum_{s_i=0}^{y_i} C_{a_i\beta}(y_i,s_i)\EE\left[ q^{(x_i(t+1)+i)(y_i-s_i)}q^{(x_{i-1}(t+1)+i-1)s_i} \big\vert \sigma^N_t, \sigma^{i-1}_{t+1}\right]\\
& = \sum_{s_i=0}^{y_i}  C_{qa_i\beta}(y_i,s_i) q^{(x_i(t)+i)(y_i-s_i)}q^{(x_{i-1}(t)+i-1)s_i},
\end{align*}
as desired to complete the proof of the lemma.
\end{proof}


In order to conclude the proof of the Bernoulli case we use conditional expectations to rewrite the left-hand side of (\ref{berncond}) with $y_0=0$ as
\begin{align*}
&\A{a_1\beta}{1} \cdots \A{a_{N-1}\beta}{N-1} \A{a_N\beta}{N} \EE\Big[q^{(x_1(t+1)+1)y_1} \EE\Big[ q^{(x_2(t+1)+2)y_2} \cdots \\
&  \cdots \EE\Big[ q^{(x_{N-1}(t+1)+N-1)y_{N-1}} \EE\Big[ q^{(x_N(t+1)+N)y_N} \big\vert \sigma^N_t,\sigma^{N-1}_{t+1}\Big]  \big\vert \sigma^N_t,\sigma^{N-2}_{t+1}\Big]\cdots \big\vert \sigma^{N}_t, \sigma^{1}_{t+1} \Big]\big\vert \sigma^{N}_t\Big]\\
&= \A{a_1\beta}{1} \cdots \A{a_{N-1}\beta}{N-1} \EE\Big[q^{(x_1(t+1)+1)y_1} \EE\Big[ q^{(x_2(t+1)+2)y_2} \cdots \EE\Big[ q^{(x_{N-1}(t+1)+N-1)y_{N-1}} \\
& \quad\times\,\sum_{s_N=0}^{y_N} C_{a_N\beta}(y_N,s_N) \EE\Big[ q^{(x_N(t+1)+N)(y_N-s_N)}q^{(x_{N-1}(t+1)+N-1)s_N} \big\vert \sigma^N_t,\sigma^{N-1}_{t+1}\Big]\big\vert \sigma^N_t,\sigma^{N-2}_{t+1}\Big]\cdots \big\vert \sigma^{N}_t, \sigma^{1}_{t+1} \Big]\big\vert \sigma^{N}_t\Big]\\
&= \A{a_1\beta}{1} \cdots \A{a_{N-1}\beta}{N-1} \EE\Big[q^{(x_1(t+1)+1)y_1} \EE\Big[ q^{(x_2(t+1)+2)y_2} \cdots \EE\Big[ q^{(x_{N-1}(t+1)+N-1)y_{N-1}} \big\vert \sigma^N_t,\sigma^{N-2}_{t+1}\Big]\cdots \big\vert \sigma^{N}_t, \sigma^{1}_{t+1} \Big]\big\vert \sigma^{N}_t\Big]  \\
&\quad\times\,\sum_{s_N=0}^{y_N} C_{qa_N\beta} (y_N,s_N) q^{(x_N(t)+N)(y_N-s_N)}q^{(x_{N-1}(t)+N-1)s_N}
\end{align*}
where the first equality was from definition and the second equality followed from applying Lemma \ref{bernlem} with $i=N$. We may continue to use Lemma \ref{bernlem} to reduce the expression above, ultimately leading to
\begin{equation*}
\textrm{LHS }(\ref{berncond}) = C_{qa_1\beta}(y_1,0) \sum_{s_2=0}^{y_2} C_{qa_2\beta}(y_2,s_2)\cdots \sum_{s_N=0}^{y_N} C_{qa_N\beta}(y_N,s_N) \prod_{i=0}^{N} q^{(x_i(t)+i)(y_i+s_{i+1}-s_{i})}
\end{equation*}
with the convention $s_0=s_1=s_{N+1}=0$. On account of Remark \ref{zeroremark} we may now recognize that the right-hand side above, is equal to the right-hand side of (\ref{berncond}), as desired to complete the proof of the Bernoulli case.
\end{proof}

\section{Free evolution equations with $k-1$ boundary conditions}\label{freesec}
%
%
%
%
%

It is not a priori clear how to approach the problem of solving the true evolution equations of Theorem \ref{truethm} in closed form. Away from the boundary of $\Wk$ (i.e., when all of the elements of $\vec{n}$ are different, or equivalently all $y_i\leq 1$) the true evolution equation is constant coefficient and separable. This constant coefficient, separable equation can be extended to all of $(\Z_{\geq 0})^k$ and is what we will call the {\it free evolution equation}. The true and free evolution equations do not match near the boundary of $\Wk$ when there is clustering in $\vec{n}$. 

The following idea can be traced to Bethe's 1931 solution to the Heisenburg spin chain \cite{Bethe} and can be thought of as a generalization of the method of images or reflection principle. Bethe's idea is to try to rewrite the true evolution equation as the restriction of the free evolution equation subject to certain boundary conditions which, if satisfied, would imply that the restriction matched the true evolution equation. Generally the free evolution equation may be defined on a larger space (here $(\Z_{\geq 0})^k$) than is physically meaningful (here $\Wk$) and the initial data need only be imposed on the physically relevant portion of the space. If such a solution satisfying the free evolution equation, boundary conditions and initial data exists, then its restriction to the physically relevant space will necessarily solve the true evolution equation for the right initial data. This existence of solutions is not assured.

If the system in consideration is $k$-dimensional, then generically there could be boundary conditions corresponding to all possible compositions of $\{1,\ldots,k\}$. If in fact only the $k-1$ nearest neighbor boundary conditions are necessary, then the system is called {\it integrable} in the language of quantum many body systems. This is the case for the true evolution equations of Poisson, geometric and Bernoulli $q$-TASEP.

For the Poisson $q$-TASEP this reduction was observed in Proposition 2.7 of \cite{BCS}. One should note that it is not a priori clear that the boundary conditions for the discrete time $q$-TASEPs with time varying parameters $\alpha_t$ or $\beta_t$ would not depend on $t$. In fact, it turns out that this $t$ dependence of the true evolution equation plays no role in the form of the boundary condition. In fact, between all three versions of $q$-TASEP, the exact same boundary condition arise.

\begin{definition}
For a function $f:\Z\to \R$ define the operators $\dx{x}$ and $\nabla$ via
\begin{equation*}
(\dx{x}f)(n) = (1+x)f(n) - x f(n-1), \qquad (\nabla f)(n) = f(n-1) -f(n).
\end{equation*}
For a function $f:\Z^k\to \R$, let $[\dx{x}]_i$ and $[\nabla]_i$ act on the $i^{th}$ coordinate of $f$.
\end{definition}

The following theorem relates the true evolution equations for $q$-TASEP to free evolution equations with $k-1$ boundary conditions.

\begin{theorem}\label{freethm}
If $u(t;\vec{n})$ solves:
\begin{enumerate}
\item For all $\vec{n}\in (\Z_{\geq 0})^k$ and $t\geq 0$,
\begin{eqnarray*}
    \frac{d}{dt} u(t;\vec{n}) &=& \sum_{i=1}^{k} a_{n_i}(1-q)[\nabla]_i(t;\vec{n}) \qquad\qquad\qquad\quad\, \textrm{(Poisson)},\\
    u(t+1;\vec{n}) &=& [\dx{-a_{n_1} \alpha_{t+1} }]_1 \cdots [\dx{-a_{n_k} \alpha_{t+1}}]_k u(t;\vec{n}) \qquad\textrm{(geometric)},\\[.2em]
    \,[\dx{ a_{n_1}\beta_{t+1}} ]_1 \cdots [\dx{a_{n_k} \beta_{t+1}}]_k u(t+1;\vec{n}) &=& [\dx{q a_{n_1}\beta_{t+1}}]_1 \cdots [\dx{q a_{n_k}\beta_{t+1}}]_k u(t;\vec{n})\qquad\, \textrm{     (Bernoulli)};
\end{eqnarray*}
\item For all $\vec{n}\in (\Z_{\geq 0 })^k$ such that for some $i\in \{1,\ldots, k-1\}$, $n_{i}=n_{i+1}$,
    \begin{equation*}
    \big([\nabla]_i - q [\nabla]_{i+1}\big) u(t;\vec{n})=0;
    \end{equation*}
\item For all $\vec{n}\in (\Z_{\geq 0})^k$ such that $n_k=0$, $u(t;\vec{n}) \equiv 0$ for all $t\geq 0$;
\item For all $\vec{n}\in \Wk$, $u(0;\vec{n}) = h_0(\vec{y}(\vec{n}))$;
\end{enumerate}
then for all $\vec{y}\in Y_k^N$, $h(t;\vec{y}) = u(t;\vec{n}(\vec{y}))$ where $h$ is the solution to the true evolution equation for the (rate parameter $a_1,\ldots, a_N$) continuous time Poisson $q$-TASEP / discrete time Geometric $q$-TASEP (with time dependent jump parameters $\alpha_1,\alpha_2,\ldots\in (0,1)$) / discrete time Bernoulli $q$-TASEP (with time dependent jump parameters $\beta_1,\beta_2,\ldots \in (0,\infty)$), started from initial data $h_0(\vec{y})$.
\end{theorem}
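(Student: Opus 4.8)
The plan is to invoke the uniqueness half of Lemma~\ref{existuniq}: since the relevant true evolution equation has a unique solution, it suffices to verify that $h(t;\vec n(\vec y))$ -- more precisely the function $\vec y\mapsto u(t;\vec n(\vec y))$ -- solves it. Conditions (2) and (3) of Definition~\ref{truedef} are immediate: if $y_0>0$ then $\vec n(\vec y)$ has last coordinate $0$, so $u(t;\vec n(\vec y))=0$ by part (3) of the present theorem, while $u(0;\vec n(\vec y))=h_0(\vec y)$ by part (4), using that $\vec y\mapsto\vec n(\vec y)$ is a bijection of $Y^N_k$ onto $\Wk$. The whole content lies in condition (1) of that definition.

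The heart of the argument is an intertwining statement that I would isolate as a lemma: for any scalar $c$ and any $g\colon(\Z_{\geq 0})^k\to\R$ satisfying the two-body boundary conditions of part (2) and vanishing whenever $n_k=0$, writing $\tilde g(\vec y):=g(\vec n(\vec y))$, one has for every $\vec n\in\Wk$
\begin{equation}\label{pf:mult}
\big([\dx{ca_{n_1}}]_1\cdots[\dx{ca_{n_k}}]_k\,g\big)(\vec n)=\big(\A{ca_1}{1}\cdots\A{ca_N}{N}\,\tilde g\big)\big(\vec y(\vec n)\big),
\end{equation}
together with the additive analogue
\begin{equation}\label{pf:add}
\Big({\textstyle\sum_{i=1}^{k}a_{n_i}(1-q)[\nabla]_i\,g}\Big)(\vec n)=\Big({\textstyle\sum_{i=1}^{N}\LL{a_i}{i}\,\tilde g}\Big)\big(\vec y(\vec n)\big).
\end{equation}
Granting this, the theorem follows quickly. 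In the Poisson case, apply (\ref{pf:add}) to $g=u(t;\cdot)$ to get $\tfrac{d}{dt}u(t;\vec n)=\sum_i\LL{a_i}{i}h(t;\vec y)$. In the geometric case, apply (\ref{pf:mult}) with $c=-\alpha_{t+1}$ and $g=u(t;\cdot)$. In the Bernoulli case, apply (\ref{pf:mult}) once with $c=\beta_{t+1}$, $g=u(t+1;\cdot)$ and once with $c=q\beta_{t+1}$, $g=u(t;\cdot)$ -- both legitimate, since parts (2) and (3) hold at \emph{every} time $t\geq 0$ -- and equate the two right-hand sides using part (1) of the free evolution equation. Each time one reads off condition (1) of the corresponding true evolution equation. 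Note that the boundary conditions, hence this entire reduction, are the same for all three models and insensitive to the parameters $\alpha_t,\beta_t$, precisely because (\ref{pf:mult})--(\ref{pf:add}) involve only the operators $[\dx{x}]_i$ and $[\nabla]_i$.

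It remains to prove the intertwining. The additive identity (\ref{pf:add}) is the easy half: inside a maximal block of coordinates of $\vec n$, say $r=j,\ldots,j+y-1$ all equal to a common value $v\geq 1$, the boundary conditions give $[\nabla]_jg=q[\nabla]_{j+1}g=\cdots=q^{\,y-1}[\nabla]_{j+y-1}g$ at $\vec n$, so that block contributes $a_v(1-q)\tfrac{1-q^{y}}{1-q}[\nabla]_{j+y-1}g(\vec n)=a_v(1-q^{y})\big(\tilde g(\vec y_{v,v-1}^{\,1})-\tilde g(\vec y)\big)=(\LL{a_v}{v}\tilde g)(\vec y)$, since lowering the last coordinate of the level-$v$ block by one keeps $\vec n$ weakly decreasing and realizes $\vec y_{v,v-1}^{\,1}$; summing over the blocks at levels $\geq 1$ yields (\ref{pf:add}) ($\LL{a_i}{i}\tilde g$ vanishes identically when $\vec n$ has no block at level $i$, and the vanishing of $g$ at $n_k=0$ is precisely what matches the contribution of a block at level $1$). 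For (\ref{pf:mult}) the engine is a single-block identity: if $g$ satisfies the two-body boundary conditions on a block $r=j,\ldots,j+y-1$, then
\begin{equation}\label{pf:single}
[\dx{b}]_j\cdots[\dx{b}]_{j+y-1}\,g=\sum_{s=0}^{y}C_b(y,s)\,g^{\langle s\rangle},
\end{equation}
where $g^{\langle s\rangle}$ is $g$ with the bottom $s$ coordinates of the block (indices $j+y-s,\ldots,j+y-1$) each lowered by one. I would prove this by induction on $y$: peel off $[\dx{b}]_j$, apply the inductive hypothesis to the remaining block for both $g$ and $g(\cdot-e_j)$, then use the boundary relation $g(\cdot-e_r)=q\,g(\cdot-e_{r+1})+(1-q)g(\cdot)$ (valid where coordinates $r$ and $r+1$ agree) to move the lowered $j$-th coordinate rightward past the equal coordinates, collecting the weights $q^{\ell}$ and $1-q$ the relation produces; the coefficient of $g^{\langle s\rangle}$ that results is $(1+bq^{\,y-1-s})C_b(y-1,s)-bq^{\,y-s}C_b(y-1,s-1)$, which equals $C_b(y,s)$ by the second relation in (\ref{Cid}). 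With (\ref{pf:single}) in hand, (\ref{pf:mult}) follows by induction on the number of blocks of $\vec n$: apply the operators for all but the top block first (legitimate because all the $[\dx{}]$-operators commute) and invoke the inductive hypothesis, obtaining the $\A{}{}$-composition over the lower levels applied to the restriction of $g$; then apply the top block's operators and use (\ref{pf:single}), producing the $C_{ca_{v_1}}(y_{v_1},\cdot)$-weighted sum over its lowerings, each of which is again weakly decreasing; finally match the outcome term-by-term with the expansion (\ref{rhsexp}) of Remark~\ref{zeroremark} (valid because $\tilde g$ vanishes when $y_0>0$), the collapse of $\A{ca_1}{1}$ there to the single coefficient $C_{ca_1}(y_1,0)$ corresponding exactly to the vanishing of $g$ once the level-$1$ block reaches level $0$.

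I expect the main obstacle to be the bookkeeping in (\ref{pf:single}) and its assembly into (\ref{pf:mult}): one must track carefully which coordinates have been lowered, repeatedly restore each term to $\Wk$ using the two-body boundary conditions, and verify that the combinatorial coefficients so produced collapse -- via the recursions (\ref{Cid}) -- into the product form dictated by (\ref{rhsexp}). (The additive identity (\ref{pf:add}) could alternatively be recovered as the $O(c)$-coefficient of (\ref{pf:mult}), using $[\dx{x}]_i=\mathrm{Id}-x[\nabla]_i$, but the direct cluster-wise telescoping above is cleaner.)
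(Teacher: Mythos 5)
Your proposal is correct and follows essentially the same route as the paper: conditions (2)--(3) of the true evolution equation are immediate, the Poisson case is handled by the same cluster-wise telescoping of $[\nabla]_i$ via the two-body boundary condition, and your single-block identity (\ref{pf:single}) and its cluster-by-cluster assembly (\ref{pf:mult}) are exactly the paper's Lemmas \ref{prevlem} and \ref{dblem} (proved by the same induction using the second relation of (\ref{Cid})), matched against the expansion (\ref{rhsexp}) with the $s_1>0$ terms killed by the vanishing at $n_k=0$. The only differences are presentational, e.g.\ isolating the intertwining as a lemma and spelling out that for the Bernoulli case it is applied at both times $t$ and $t+1$, which the paper does implicitly.
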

\begin{proof}
It is immediate from the third and fourth hypotheses of Theorem \ref{freethm} that conditions (2) and (3) of Definition \ref{truedef} are satisfied. It suffices to check that condition (1) of the true evolution equation is satisfied by $u(t;\vec{n}(\vec{y}))$. We show this first for the Poisson case (which previously appeared in the proof of Proposition 2.7 of \cite{BCS}). We then deal simultaneously with the geometric and Bernoulli cases by using Lemma \ref{dblem} below. In what follows we assume that $\vec{n} = \vec{n}(\vec{y})$ and hence $\vec{y}= \vec{y}(\vec{n})$.

\noindent {\bf Poisson case:} Recall that the size of the cluster of elements of $\vec{n}$ equal to $i$ is given by $y_i$. Consider the cluster of elements equal to $N$: $n_1=n_2=\cdots =n_{y_{N}}$. Every other cluster works similarly to what we now describe and though it is possible that the cluster we study is empty, we may repeat the below calculation for any other cluster.

In order to prove the true evolution equation it suffices (by summing over all clusters) to show that
\begin{equation*}
(1-q) \sum_{i=1}^{y_N} a_N [\nabla]_i u(t;\vec{n}) = a_N (1-q^{y_N}) \nabla_{y_N} u(t;\vec{n}) = \LL{a_N}{N}u(t;\vec{y}),
\end{equation*}
where the second equality follows immediately from the definition of $\LL{a_N}{N}$. To see the first equality we use the boundary condition for the free evolution equation which implies that for $i\in \{1,\ldots, y_N\}$, $[\nabla]_i u(t;\vec{n}) = q^{y_N-i}[\nabla]_{y_N} u(t;\vec{n})$. Summing over $i$ gives the desired equality.

\noindent {\bf Geometric and Bernoulli cases:}
We use Lemma \ref{dblem} below to prove the equivalence of the free and true evolution equations for both the geometric and Bernoulli cases. However, we first provide a lemma which quickly leads to the proof of Lemma \ref{dblem}.

\begin{lemma}\label{prevlem}
Fix $y\geq 1$ and $a>0$. Assume that a function $f(n_1,\ldots, n_y)$ is such that if $n_i=n_{i+1}$ for any $i\in \{1,\ldots, y-1\}$ then $([\nabla]_i - q[\nabla]_{i+1})f(\vec{n}) = 0$. Then
\begin{equation}\label{desiredf}
\,[\dx{a}]_1 \cdots [\dx{a}]_y f(n,\ldots, n) = \sum_{s=0}^{y} C_{a}(y,s)  f(\underbrace{n,\ldots, n}_{y-s}, \underbrace{n-1,\ldots, n-1}_{s}).
\end{equation}
\end{lemma}
\begin{proof}
We prove this by induction in $y$. For $y=1$ observe that by definition of $[\dx{a}]$,
\begin{eqnarray*}
\,[\dx{a}]_1 f(n) &=& (1+a)f(n) - a f(n-1)\\
&=& C_a(1,0) f(n) + C_a(1,1) f(n-1),
\end{eqnarray*}
since $C_a(1,0)=(1+a)$ and $C_{a}(1,1) = -a$.

For $y>1$ assume that we have proved the lemma for $y-1$. Thus we have
\begin{eqnarray}\label{aboveplug}
\nonumber\,[\dx{a}]_1 [\dx{a}]_2 \cdots [\dx{a}]_y f(n,\ldots, n) &=& [\dx{a}]_1 \sum_{s=0}^{y-1} C_{a}(y-1,s)  f(n,\underbrace{n,\ldots, n}_{y-1-s}, \underbrace{n-1,\ldots, n-1}_{s})\\
&=& \sum_{s=0}^{y-1} C_{a}(y-1,s) (1+a) f(n,\underbrace{n,\ldots, n}_{y-1-s}, \underbrace{n-1,\ldots, n-1}_{s})\\
\nonumber&& + \sum_{s=0}^{y-1} C_{a}(y-1,s) (-a) f(n-1,\underbrace{n,\ldots, n}_{y-1-s}, \underbrace{n-1,\ldots, n-1}_{s}).
\end{eqnarray}

Using the relation $([\nabla]_i - q[\nabla]_{i+1})f(\vec{n}) = 0$ for $\vec{n}$ such that $n_i=n_{i+1}$, we see that
\begin{eqnarray*}
f(n-1,\underbrace{n,\ldots, n}_{y-1-s}, \underbrace{n-1,\ldots, n-1}_{s}) &=& q^{y-1-s} f(\underbrace{n,\ldots, n}_{y-1-s}, \underbrace{n-1,\ldots, n-1}_{s+1})\\
&& + (1-q^{y-1-s}) f(\underbrace{n,\ldots, n}_{y-s}, \underbrace{n-1,\ldots, n-1}_{s}).
\end{eqnarray*}
Plugging this into (\ref{aboveplug}) we arrive at
\begin{align*}
& \sum_{s=0}^{y-1} C_{a}(y-1,s) (1+a) f(n,\underbrace{n,\ldots, n}_{y-1-s}, \underbrace{n-1,\ldots, n-1}_{s})\\
& + \sum_{s=0}^{y-1} C_{a}(y-1,s) (-a) \Bigg(q^{y-1-s} f(\underbrace{n,\ldots, n}_{y-1-s}, \underbrace{n-1,\ldots, n-1}_{s+1})+ (1-q^{y-1-s}) f(\underbrace{n,\ldots, n}_{y-s}, \underbrace{n-1,\ldots, n-1}_{s})\Bigg).
\end{align*}
Grouping the coefficients of $f(\underbrace{n,\ldots, n}_{y-s}, \underbrace{n-1,\ldots, n-1}_{s})$ in the above summations and using the second relationship in (\ref{Cid}) we recover (\ref{desiredf}) and hence complete the inductive step.
\end{proof}

\begin{lemma}\label{dblem}
Fix $N,k\geq 1$ and $a_1,\ldots, a_N>0$. Assume that a function $f(n_1,\ldots, n_y)$ is such that if $n_i=n_{i+1}$ for any $i\in \{1,\ldots, y-1\}$ then $([\nabla]_i - q[\nabla]_{i+1})f(\vec{n}) = 0$. Then for all $n_1\geq n_2\geq \cdots \geq n_k>0$,
\begin{equation*}
\,[\dx{a_{n_1}}]_1 \cdots [\dx{a_{n_k}}]_k  f(n_1,\ldots, n_k) = \sum_{s_1=0}^{y_1} C_{a_1}(y_1,s_1) \cdots  \sum_{s_N=0}^{y_N} C_{a_N}(y_N,s_N) f(\vec{n}(\vec{y}^{\,\vec{s}})).
\end{equation*}
\end{lemma}
\begin{proof}
Observe that $\vec{n}$ can be split into clusters of equal variables so that $n_1= \ldots =n_{y_N}=N$ and $n_{y_N+1}=\cdots = n_{y_N+y_{N-1}}=N-1$ and so on (recall that $\vec{y}=\vec{y}(\vec{n})$ gives the sizes of these clusters). Due to the ordering of $\vec{n}$, the result we seek to prove follows readily from applying Lemma \ref{prevlem} to each cluster of $\vec{n}$ starting with the cluster of $n_i$ which equal 1, and ending with the cluster of $n_i$ equal to $N$.
\end{proof}
Let us now complete the proof of Theorem \ref{freethm}.

We may apply Lemma \ref{dblem} (given below) to the solution $u(t;\vec{n})$ of the geometric / Bernoulli free evolution equation (the boundary condition on $u$ implies the hypotheses of the lemma are met). Since $u(t;\vec{n}) \equiv 0$ for $n_k=0$, it follows that the summation over $s_1$ in the outcome of the application of the lemma should be removed and only the $s_1=0$ term remain (that is, because all terms involving $s_1>0$ are necessarily zero). Comparing the outcome to the (respective) true evolution equation of Definition  \ref{truedef} one finds that they match.
\end{proof}

The following is an immediate corollary of the combination of Theorems \ref{freethm} and \ref{truethm}.
\begin{corollary}\label{freethmcor}
Consider the (rate parameter $a_1,\ldots, a_N$) continuous time Poisson $q$-TASEP / discrete time Geometric $q$-TASEP (with time dependent jump parameters $\alpha_1,\alpha_2,\ldots\in (0,1)$) / discrete time Bernoulli $q$-TASEP (with time dependent jump parameters $\beta_1,\beta_2,\ldots\in (0,\infty)$) started from an arbitrary initial condition $\vec{x}(0) = \vec{x}$. Then for any $k\geq 1$ and $n_1\geq n_2\geq \cdots \geq n_k>0$,
\begin{equation*}
\EE\left[\prod_{i=1}^k q^{x_{n_i}(t)+n_i}\right]  = u(t;\vec{n})
\end{equation*}
where $u(t;\vec{n})$ solves the (respective) free evolution equation with $k-1$ boundary conditions started from initial data
\begin{equation*}
u(0;\vec{n}) = \EE\left[\prod_{i=1}^k q^{x_{n_i}+n_i}\right].
\end{equation*}
\end{corollary}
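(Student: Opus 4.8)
The plan is to obtain this corollary purely by chaining together Theorem~\ref{truethm} and Theorem~\ref{freethm}; the only work is the bookkeeping between the $\vec{n}$ and $\vec{y}$ coordinate systems and the matching of initial data. First I would invoke Theorem~\ref{truethm} for the process started from $\vec{x}(0)=\vec{x}$: for every $k\geq 1$ and $\vec{n}\in\Wk$ it gives
\[
\EE\Big[\prod_{i=1}^k q^{x_{n_i}(t)+n_i}\Big] = h(t;\vec{y}(\vec{n})),
\]
where $h$ is the solution (unique by Lemma~\ref{existuniq}) of the relevant true evolution equation with initial data $h_0(\vec{y}(\vec{n})) = \EE\big[\prod_{i=1}^k q^{x_{n_i}+n_i}\big]$.

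Next I would let $u(t;\vec{n})$ denote a solution of the free evolution equation with $k-1$ boundary conditions in the sense of conditions (1)--(4) of Theorem~\ref{freethm}, taking in condition~(4) the initial data to be precisely the $h_0$ above. Note that the restriction $n_1\geq\cdots\geq n_k>0$ appearing in the corollary corresponds to $y_0=0$, so hypothesis~(3) of Theorem~\ref{freethm} is consistent with condition~(2) of Definition~\ref{truedef}, and condition~(4) is imposed only on $\Wk$, exactly as needed. Theorem~\ref{freethm} then yields $h(t;\vec{y}) = u(t;\vec{n}(\vec{y}))$ for all $\vec{y}\in Y_k^N$. Here one should observe that Theorem~\ref{freethm} is only an implication and does not assert uniqueness of $u$ on all of $(\Z_{\geq 0})^k$; however, since $h$ is unique, this forces the values $u(t;\vec{n})$ for $\vec{n}\in\Wk$ to be unambiguous, which is all that the corollary asserts.

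Finally I would combine the two displays: for $\vec{n}\in\Wk$ with $n_k>0$, set $\vec{y}=\vec{y}(\vec{n})$, so that $\vec{n}(\vec{y})=\vec{n}$ and hence
\[
\EE\Big[\prod_{i=1}^k q^{x_{n_i}(t)+n_i}\Big] = h(t;\vec{y}(\vec{n})) = u(t;\vec{n}),
\]
with initial data $u(0;\vec{n}) = h_0(\vec{y}(\vec{n})) = \EE\big[\prod_{i=1}^k q^{x_{n_i}+n_i}\big]$, which is exactly the claim. I do not expect a serious obstacle here, as the mathematical content is entirely contained in Theorems~\ref{truethm} and~\ref{freethm}; the only point requiring care is to verify that the initial data, the zero-boundary conventions ($y_0>0$ versus $n_k=0$), and the translation $\vec{n}\leftrightarrow\vec{y}$ line up consistently across the two theorems.
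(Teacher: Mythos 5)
Your proposal is correct and is exactly the argument the paper intends: the paper states the corollary as an immediate consequence of combining Theorems~\ref{truethm} and~\ref{freethm}, and your chaining of the two, together with the careful matching of initial data and the $\vec{n}\leftrightarrow\vec{y}$ translation, fills in precisely the routine bookkeeping the paper leaves implicit. Your observation that uniqueness of $h$ (Lemma~\ref{existuniq}) is what makes $u$ well-defined on $\Wk$ is a correct and worthwhile clarification.
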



\section{Checking the nested contour integral formulas}\label{checkingformulas}

We now conclude the proof of Theorem \ref{momthm} by showing that the right-hand side of (\ref{momthmeq}) solves the free evolution equations with $k-1$ boundary conditions which is given in Theorem \ref{freethm}. Corollary \ref{freethmcor} then immediately implies Theorem \ref{momthm}. For ease of readability let us recall the right-hand side of (\ref{momthmeq}) and denote it as
\begin{equation}\label{momthmeq2}
m(t;\vec{n}) = \frac{(-1)^k q^{\frac{k(k-1)}{2}}}{(2\pi \iota)^k} \int \cdots \int \prod_{1\leq A<B\leq k} \frac{z_A-z_B}{z_A-qz_B} \prod_{j=1}^{k} \prod_{i=1}^{n_j}\frac{a_i}{a_i-z_j} \frac{f^{\ell}(qz_j)}{f^{\ell}(z_j)} \frac{dz_j}{z_j}.
\end{equation}

There are four hypotheses to check in Theorem \ref{freethm}. The first hypothesis to check is that $m(t;\vec{n})$ solves the free evolution equation corresponding with the choice of $\ell\in \big\{{\rm Poi}, {\rm geo}, {\rm Ber}\big\}$. Since the free evolution equation is linear and separable, it suffices to check that for arbitrary $z$ and $n$,
$$
\prod_{i=1}^{n} \frac{a_i}{a_i-z} \frac{f^{\ell}(qz)}{f^{\ell}(z)}
$$
satisfies the $k=1$ version of the respective free evolution equations.

\noindent {\bf Poisson case:} We must check that
$$
\frac{d}{dt} \prod_{i=1}^{n} \frac{a_i}{a_i-z} e^{(q-1)zt}  = a_n(1-q) \nabla \prod_{i=1}^{n} \frac{a_i}{a_i-z} e^{(q-1)zt}.
$$
The $d/dt$ brings out a factor $(q-1)z$ on the right-hand side, while the $\nabla$ brings out a factor $\tfrac{a_n-z}{a_n} - 1$. Comparing both sides we find equality.

\noindent {\bf Geometric case:} We must check that
$$
\prod_{i=1}^{n} \frac{a_i}{a_i-z} \prod_{s=1}^{t+1} (1-\alpha_s z)  = \dx{-a_n\alpha_{t+1}} \prod_{i=1}^{n} \frac{a_i}{a_i-z} \prod_{s=1}^{t} (1-\alpha_s z).
$$
The $\dx{-a_n\alpha_{t+1}}$ brings out a factor $\big((1-a_n \alpha_{t+1}) + a_n\alpha_{t+1} \tfrac{a_n-z}{a_n}\big)$ which simplifies to $(1-\alpha_{t+1} z)$, and hence we find equality.

\noindent {\bf Bernoulli case:} We must check that
$$
\dx{a_n\beta{t+1}} \prod_{i=1}^{n} \frac{a_i}{a_i-z} \prod_{s=1}^{t+1} \frac{1+q \beta_s z}{1+\beta_s z}  = \dx{q a_n\beta{t+1}} \prod_{i=1}^{n} \frac{a_i}{a_i-z} \prod_{s=1}^{t} \frac{1+q \beta_s z}{1+\beta_s z}.
$$
The $\dx{a_n\beta{t+1}}$ on the left-hand side brings out a factor $\big((1+a_n \beta_{t+1}) -a_n\beta_{t+1} \frac{a_n-z}{a_n}\big)$ which simplifies to $(1+\beta_{t+1} z)$, while the $\dx{q a_n\beta{t+1}} $ on the right-hand side brings out a factor $\big((1+qa_n \beta_{t+1}) -qa_n\beta_{t+1} \frac{a_n-z}{a_n}\big)$ which simplifies to $(1+q\beta_{t+1} z)$. Comparing both sides we find equality.

The second hypothesis to check is that the boundary condition is satisfied. Without loss of generality assume $n_1=n_2$ (general $n_i=n_{i+1}$ is identical). We wish to show that
$$\left([\nabla]_1-q[\nabla]_2\right) m(t;\vec{n}) = 0.$$
Applying the operator $[\nabla]_1-q[\nabla]_2$ to the integrand of $m(t;\vec{n})$ brings out a factor of $-(z_1-qz_2)$. This new factor cancels the denominator $(z_1-qz_2)$. On account of this we may freely (without encountering any poles) deform the contours for $z_1$ and $z_2$ to coincide. Hence we may write
$$\left([\nabla]_1-q[\nabla]_2\right) m(t;\vec{n}) = \int \int (z_1-z_2) G(z_1)G(z_2)dz_1 dz_2$$
where the function $G(z)$ involves the integrals in $z_3,\ldots, z_k$. Since the two contours are identical, this integral is clearly zero, as desired.

The third hypothesis to check is that for $n_k=0$, $m(t;\vec{n}) = 0$. This follows from simple residue calculus since when $n_k=0$ there is no pole at $z_k=1$ and since this was the only pole contained by the $z_k$ contour, by Cauchy's theorem the $z_k$ integral equals 0.

The final hypothesis to check is the initial data. Since we are dealing with step initial condition $x_i(0)=-i$, it follows that $q^{x_i(0)+i} \equiv 1$ and hence we must show that $m(0;\vec{x}) \equiv 1$. This follows from residue calculus as well. Expand the $z_1$ contour to infinity. Since $t=0$, $f^{\ell}_0(z)=1$ and so the only pole in $z_1$ is encountered at $z_1=0$ ($z_1=\infty$ is not a pole because of the decay coming from $1/(1-z_1)$). Because we pass the pole at 0 from the outside, the contribution of the residue is $-q^{-(k-1)}$ times the same integral, but with $k-1$ variables. Repeating this procedure yields the desired result.

This completes the proof of Theorem \ref{momthm}.

\section{Relation to duality and Macdonald processes}\label{endsec}

\subsection{True evolution equations and duality}\label{dualityrem}
At least for the Poisson and geometric $q$-TASEP, the true evolution equations are closely related to Markov process dualities. This was understood for Poisson $q$-TASEP in \cite{BorCor}. Two Markov processes $x(t)$ and $y(t)$  (with state spaces $X$ and $Y$ respectively) are said to be dual with respect to a function $H:X\times Y \to \R$ if for all $x\in X$, $y\in Y$ and $t\geq 0$,
\begin{equation*}
\EE^x\left[H(x(t);y)\right] = \EE^{y} \left[H(x;y(t))\right]
\end{equation*}
where $\EE^x$ and $\EE^y$ refer to the expectations of the respective Markov chains $x(t)$ and $y(t)$ started from $x(0)=x$ and $y(0)=y$.

Let us recall the duality for the Poisson $q$-TASEP. Let $\vec{y}(t)$ be the totally asymmetric zero range process with state space $Y^N$ from (\ref{yn}) in which the rate at which a particle moves from site $i$ to $i-1$ (for $i=1,\ldots, N$) is given by $a_i(1-q^{y_i})$. Notice that this is the same jump rate as in Remark \ref{zerorem} though presently there are no sources or sinks. Let $\vec{x}(t)$ be the continuous time Poisson $q$-TASEP with $N$ particles. Then it is easily checked that $\vec{x}(t)$ and $\vec{y}(t)$ are dual with respect to
\begin{equation*}
H(\vec{x};\vec{y}) = \prod_{i=0}^N q^{(x_{i}+i)y_i}
\end{equation*}
where for $y_0=0$ the product is over $i\geq 1$ and for $y_0>0$, the product is taken to be zero (due to the virtual particle $x_0=+\infty$). The true evolution equation follows immediately from this duality since
\begin{equation*}
\frac{d}{dt} \EE^x\left[H(\vec{x}(t);\vec{y})\right] = \frac{d}{dt} \EE^y\left[H(\vec{x};\vec{y}(t))\right] = \sum_{i=1}^{N} \LL{a}{i} \left[H(\vec{x};\vec{y}(t))\right].
\end{equation*}
Here, the first equality is from duality and the second one is from the Kolmogorov forward equation and the fact that  $\sum_{i=1}^{N} \LL{a}{i}$ is the generator of the dual zero range process.

It is also possible to relate the discrete time geometric $q$-TASEP to a discrete time totally asymmetric zero range process $\vec{y}(t)$ with state space $Y^N$. In parallel, the state of $\vec{y}(t)$ is updated to that of $\vec{y}(t+1)$ via the following procedure which occurs at each site $i=1,\ldots, N$. At time $t$ there are $y_i(t)$ particles lined up above site $i$. Start with the bottom-most particle and with probability $a_i\alpha_{t+1}$ move the particle to site $i-1$ at time $t+1$. Then proceed sequentially by considering the second particle from bottom, and then the third and so on. For the $j^{th}$ particle from bottom, with probability $a_i\alpha_{t+1}q^{m_j}$ move it to site $i-1$ at time $t+1$. Here $m_j$ counts the number of particles in site $i$ which are below the $j^{th}$ particle and which have been tagged to move to site $i-1$ at time $t+1$. The update rule is sequential within each site $i$, but parallel amongst different sites.

This duality can be proved using similar considerations as in the above proof of the geometric true evolution equation. Since it is not necessary for the purposes of this paper, we leave this just as a remark and do not present this proof.

It is not presently clear how to relate the true evolution equation for the discrete time Bernoulli $q$-TASEP to a duality for that system.

\subsection{True evolution equations and Macdonald difference operators}\label{MacSec}
(Ascending) Macdonald processes (see Section 2 and 3 of \cite{BorCor} for more details) are measures on triangular arrays of interlacing nonnegative integers
$$\lambda = \big\{\lambda^{(j)}_{i}, 1\leq i\leq j\leq N : \lambda^{(j)}_{i} \leq \lambda^{(j-1)}_{i-1}\leq \lambda^{(j)}_{i-1}\big\}.$$
We write $\lambda^{(j)}$ for the $j^{th}$ level of the triangle, and note that $\lambda^{(j)}$ is a partition of length at most $j$. The measure on $\lambda$ is specified by $a_1,\ldots, a_N>0$, $\{\alpha_1,\alpha_2,\ldots\}$, $\{\beta_1,\beta_2,\ldots,\}$, $\gamma\geq 0$ and two parameters $q,t\in (0,1)$ as follows:
\begin{equation*}
\PP(\lambda) = \frac{P_{\lambda^{(1)}}(a_1) P_{\lambda^{(2)}/\lambda^{(1)}}(a_2) \cdots P_{\lambda^{(N)}/\lambda^{(N-1)}}(a_N) Q_{\lambda^{(N)}}(\rho)}{\Pi(a_1,\ldots, a_N;\rho)},
\end{equation*}
and the marginal of this measure on a single level $\lambda^{(N)}$ is given by
\begin{equation*}
\PP(\lambda^{(N)}) = \frac{P_{\lambda^{(N)}}(a_1,\ldots, a_N) Q_{\lambda^{(N)}}(\rho)}{\Pi(a_1,\ldots, a_N;\rho)}.
\end{equation*}
In the above, $P$ and $Q$ are Macdonald symmetric functions (indexed by partitions or skew partitions) and $\rho$ is a Macdonald positive specialization (i.e., a homomorphism from the algebra of symmetric functions to $\R$ that sends Macdonald symmetric functions to elements of $\R_{\geq 0}$) specified via the generating function
\begin{equation*}
\Pi(u;\rho) = \sum_{j=0}^{\infty} u^j Q_{(j)}(\rho) = e^{\gamma u} \prod_{i\geq 0}\frac{(t\alpha_i u;q)_{\infty}}{(\alpha_i u;q)_{\infty}} (1+\beta_i u).
\end{equation*}
For such a specialization and positive $a_i$, the numerator in the definition of $\PP$ above is non-negative for all triangular arrays $\lambda$, and as long as the normalizing constant $\Pi(a_1,\ldots, a_N;\rho) := \Pi(a_1,\rho) \cdots \Pi(a_N;\rho)$ is finite, the above expression describes a probability measure.

For a partition $\mu$ of length $N$, the Macdonald polynomial $P_{\mu}$ is an eigenfunction for the first Macdonald difference operator
\begin{equation*}
D_N = \sum_{i=1}^{N} \prod_{j\neq i} \frac{tx_i-x_j}{x_i-x_j}  T_{q,x_i}
\end{equation*}
(where $T_{q,x_i}f(x_1,\ldots ,x_N) = f(x_1,\ldots, qx_i,\ldots, x_N)$) with eigenvalue
\begin{equation}
q^{\mu_1} t^{N-1} +q^{\mu_2}t^{N-2} + \cdots q^{\mu_N} t^0.
\end{equation}
Notice that when $t=0$ the eigenvalue is simply $q^{\mu_N}$. We will assume henceforth that $t=0$ (though everything except the commutation relations holds for general $t$).

The normalizing constant is given by
$$\Pi(a_1,\ldots, a_N;\rho) = \sum_{\lambda^{(N)}} P_{\lambda^{(N)}}(a_1,\ldots,a_N)Q_{\lambda^{(N)}}(\rho).$$
It was observed in \cite{BorCor} that by linearity of $D_N$ and by the above eigenvalue relation, for any $y\geq 0$
$$
\frac{(D_N)^y \Pi(x_1,\ldots,x_N;\rho)}{\Pi(x_1,\ldots,x_N;\rho)}\Big\vert_{x_1=a_1,\ldots, x_N=a_N}  = \EE\big[ q^{\lambda^{(N)}_{N} y} \big].
$$
A generalization of this was provided in \cite{BCGS}, showing that if $D_{j}$ represents the first difference operator acting on $x_1,\ldots, x_j$, then for any $y_1,\ldots, y_N\geq 0$
$$
\frac{(D_1)^{y_1} \cdots (D_N)^{y_N} \Pi(x_1,\ldots,x_N;\rho)}{\Pi(x_1,\ldots,x_N;\rho)}\Big\vert_{x_1=a_1,\ldots, x_N=a_N}  = \EE\left[\prod_{i=1}^{N} q^{\lambda^{(i)}_i y_i} \right].
$$

Since the $D_j$ and $\Pi$ are explicit and suitably nice, it is possible to encode the above repeated application of difference operators onto $\Pi$ in terms of a nested contour integral formula (one recovers the difference operators by expansion of the formula into residues). It is shown in \cite{BCGS} that for $\vec{y}=(0,y_1,\ldots,y_N)$ with $\sum_{j=1}^{N} y_j = k$, and $\vec{n}= \vec{y}(\vec{n})$,
\begin{equation*}
\EE\left[\prod_{i=1}^{N} q^{\lambda^{(i)}_i y_i} \right]  = \frac{(-1)^k q^{\frac{k(k-1)}{2}}}{(2\pi \iota)^k} \int \cdots \int \prod_{1\leq A<B\leq k} \frac{z_A-z_B}{z_A-qz_B} \prod_{j=1}^{k} \left(\prod_{i=1}^{n_j} \frac{a_i}{a_i-z_j}\right) \frac{\Pi(qz_j;\rho)}{\Pi(z_j;\rho)} \frac{dz_j}{z_j}.
\end{equation*}
where the contour of integration for $z_A$ contains $a_1,\ldots, a_N$, and $\{qz_B\}_{B>A}$ but not 0 or any other poles of the integrand.

Plugging in the formula for $\Pi$ (given above in terms of $\alpha$'s, $\beta$'s and $\gamma$) it is immediately clear that
\begin{equation}\label{lambdax}
\EE\left[\prod_{i=1}^{N} q^{\lambda^{(i)}_i y_i} \right] = \EE\left[\prod_{i=1}^{N} q^{(x_i(t)+i)y_i} \right],
\end{equation}
where the expression on the right-hand side corresponds to the version of $q$-TASEP considered in Remark \ref{arborder}. Therefore, since these moments characterize the joint law of $\{\lambda^{(i)}_i\}_{i=1}^N$ and $\{x_{i}(t)+ i\}_{i=1}^{N}$, it follows that these two sets are equal in distribution.

There are two immediate questions this equality prompts. First is whether there is a way to embed the various $q$-TASEP dynamics into a Markov chain on interlacing triangular arrays so that this equality becomes natural. Second is how the true evolution equation (which the right-hand side of the above equality satisfies) can be seen directly from the left-hand side as a consequence of Macdonald difference operator relations (without any reference to $q$-TASEPs). We answer both of these questions for continuous time Poisson $q$-TASEP first, and then indicate some extensions of the answer to the second question for the two discrete time versions as well.

There exist Markov dynamics on the space of interlacing triangular arrays which preserve the class of Macdonald processes. One example of such a dynamic was introduced and studied in Sections 2 and 3 of \cite{BorCor} (see \cite{OConPei,BorPet} for other such dynamics). When the parameter $t=0$ the continuous time dynamics on the triangular arrays becomes quite simple (cf. Section 3.3 of \cite{BorCor}). Each $\lambda^{(j)}_{i}$ attempts to increase its value by one according to independent exponentially distributed jumping times with rate given by
$$
a_j \frac{(1-q^{\lambda^{(j-1)}_{i-1} - \lambda^{(j)}_{i}}) (1-q^{\lambda^{(j)}_{i} - \lambda^{(j)}_{i+1}+1})}{(1-q^{\lambda^{(j)}_{i} - \lambda^{(j-1)}_{i}})},
$$
where terms involving indices outside of the triangular array are omitted.
The simplest Macdonald process with which to initiate $\lambda$ is specified by taking all $\alpha_i=\beta_i=\gamma=0$ and corresponds with taking $\lambda^{(j)}_{i}\equiv 0$. Then if the above dynamics are run for time $t$ (not to be confused with the Macdonald parameter $t$ which has been fixed to be zero) then $\lambda(t)$ is distributed according to a Macdonald process specified by taking all $\alpha_i=\beta_i=0$ but $\gamma=t$.

The above dynamics have the property that the evolution of $\{\lambda^{(i)}_i\}_{i=1}^{N}$ is marginally Markovian (with respect to its own filtration). In fact, calling $\lambda^{(i)}_i = x_{i}(t)+ i$ we readily observe that $x_i(t)$ evolves according to continuous time Poisson $q$-TASEP with particle jump parameters $a_1,\ldots, a_N$. It was through this line of reasoning that Poisson $q$-TASEP was first introduced and through the above Macdonald difference operator considerations that the formulas for moments of $q^{x_{i}(t)+i}$ were initially calculated.

In light of equation (\ref{lambdax}) we are led to the second question, of how the true evolution equation for Poisson $q$-TASEP can be seen directly from the language of Macdonald difference operators. In particular, we would like to show that
$$h(t;\vec{y})=\EE\left[\prod_{i=1}^{N} q^{(x_i(t)+i) y_i} \right]$$
solves
\begin{equation}\label{lambdatrue}
\frac{d}{dt} h(t;\vec{y})  = \sum_{i=1}^{N} a_i (1-q^{y_i}) \big(h(t;\vec{y}^{\,1}_{i,i-1})-h(t;\vec{y})\big).
\end{equation}

We claim that the fact that $h(t;\vec{y})$ solves the true evolution equation can be seen as a consequence of a commutation relation for Macdonald first difference operators, as well as the fact that by the above discussion
$$h(t;\vec{y})= \frac{(D_1)^{y_1} \cdots (D_N)^{y_N} e^{(x_1+\cdots +x_N)t}}{e^{(x_1+\cdots +x_N)t}}\Big\vert_{x_1=a_1,\ldots, x_N=a_N}.$$

The commutation relation is given as part 1 of the following lemma. For parts 2 and 3 it is convenient to define
$$
\mathcal{\tilde{A}}^{(a,k)}_n =\sum_{s=0}^{k} C_{a}(k,s)(D_{n-1})^s (D_{n})^{k-s},
$$
where $C_{a}(k,s)$ were defined in (\ref{Cas}).

\begin{lemma}\label{commlemma}
\mbox{}
\begin{enumerate}
\item For any $k\geq 0$ and any $1\leq n\leq N$,
\begin{equation}\label{commute}
\left[(D_n)^k,\sum_{i=1}^{n} x_i \right] = (1-q^k) x_n (D_{n-1}-D_n) (D_n)^{k-1}.
\end{equation}
\item For any $k\geq 0$, $1\leq n\leq N$ and $\alpha\in (0,1)$,
\begin{equation*}
(D_n)^k \prod_{i=1}^{n}\frac{1}{(\alpha x_i;q)_{\infty}} = \prod_{i=1}^{n}\frac{1}{(\alpha x_i;q)_{\infty}} \mathcal{\tilde{A}}^{(-\alpha x_n,k)}_n.
\end{equation*}
\item
For any $k\geq 0$, $1\leq n\leq N$ and $\beta\in (0,\infty)$,
\begin{equation*}
 \mathcal{\tilde{A}}^{(\beta x_n,k)}_n \prod_{i=1}^{n}(1+\beta x_i)=\prod_{i=1}^{n}(1+\beta x_i)  \mathcal{\tilde{A}}^{(q\beta x_n,k)}_n .
\end{equation*}
\end{enumerate}
\end{lemma}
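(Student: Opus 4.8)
The plan is to prove all three identities by induction on $k$, where both the base case $k=1$ and the inductive step rest on one elementary fact about the operator $D_n$. Write $D_n=\sum_{i=1}^{n}A_i\,T_{q,x_i}$ with $A_i=\prod_{1\le j\le n,\,j\ne i}\frac{-x_j}{x_i-x_j}$ (and $D_0=0$, the empty sum), and let $B_i=\prod_{1\le j\le n-1,\,j\ne i}\frac{-x_j}{x_i-x_j}$ be the coefficients of $D_{n-1}$, so $A_i=\frac{-x_n}{x_i-x_n}B_i$ for $i<n$. Pulling this factor out of the coefficient of $T_{q,x_i}$ for $i<n$ and regrouping yields, by a short computation, the identity
$$\sum_{i=1}^{n}A_i\,x_i\,T_{q,x_i}=x_n\,(D_n-D_{n-1}),$$
where $x_n$ acts by multiplication. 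This is the engine of the whole argument.

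For the base cases, combine this with the commutation rules for $T_{q,x_i}$. Since $T_{q,x_i}\bigl((\sum_j x_j)f\bigr)=(\sum_j x_j+(q-1)x_i)T_{q,x_i}f$, one gets $\bigl[D_n,\sum_{i=1}^n x_i\bigr]=(q-1)\sum_i A_i x_i T_{q,x_i}=(1-q)\,x_n(D_{n-1}-D_n)$, which is $(\ref{commute})$ at $k=1$. Likewise $T_{q,x_i}\prod_{j}(\alpha x_j;q)_\infty^{-1}=(1-\alpha x_i)\prod_j(\alpha x_j;q)_\infty^{-1}T_{q,x_i}$ and $T_{q,x_i}\prod_j(1+\beta x_j)=\tfrac{1+q\beta x_i}{1+\beta x_i}\prod_j(1+\beta x_j)T_{q,x_i}$, so the $k=1$ cases of parts 2 and 3 become, after pushing $\prod_j(\alpha x_j;q)_\infty^{-1}$ (resp. $\prod_j(1+\beta x_j)$) to the left, coefficient-by-coefficient identities among the $T_{q,x_i}$; these are checked by the same splitting-off of $\frac{-x_n}{x_i-x_n}$ for $i<n$, using $\mathcal{\tilde{A}}^{(a,1)}_n=C_a(1,0)D_n+C_a(1,1)D_{n-1}=(1+a)D_n-aD_{n-1}$.

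For the inductive step of part 1, set $X:=[D_n,\sum_{i=1}^n x_i]$; the claim is $D_nX=qXD_n$, i.e. $(D_n)^2\sum_i x_i-(1+q)D_n(\sum_i x_i)D_n+q(\sum_i x_i)(D_n)^2=0$. I would verify this on the basis $\{P_\mu\}$ of symmetric polynomials in $x_1,\dots,x_n$: since $t=0$ one has $D_nP_\mu=q^{\mu_n}P_\mu$, and multiplication by $\sum_i x_i$ sends $P_\mu$ to a combination of $P_\nu$ over $\nu$ obtained by adding one box, for which $\nu_n\in\{\mu_n,\mu_n+1\}$; the coefficient of each $P_\nu$ on the left side then carries the factor $(q^{\nu_n}-q^{\mu_n})(q^{\nu_n}-q^{\mu_n+1})$, which vanishes in either case. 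Given $D_nX=qXD_n$, telescoping gives $[(D_n)^k,\sum_i x_i]=\sum_{j=0}^{k-1}(D_n)^jX(D_n)^{k-1-j}=\sum_{j=0}^{k-1}q^jX(D_n)^{k-1}=\tfrac{1-q^k}{1-q}X(D_n)^{k-1}$, which with the $k=1$ identity for $X$ is exactly $(\ref{commute})$.

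For parts 2 and 3, note that conjugating the $k=1$ case by $G:=\prod_{j=1}^n(\alpha x_j;q)_\infty^{-1}$ (a multiplication operator, hence commuting with every $A_i$) gives $D_nG=G\,M$ with $M=(1-\alpha x_n)D_n+\alpha x_nD_{n-1}=\mathcal{\tilde{A}}^{(-\alpha x_n,1)}_n$, so part 2 is equivalent to $M^k=\mathcal{\tilde{A}}^{(-\alpha x_n,k)}_n$, and part 3 is the analogous intertwining statement for $\prod_j(1+\beta x_j)$. The induction then reduces to the operator identity $M\,\mathcal{\tilde{A}}^{(-\alpha x_n,k)}_n=\mathcal{\tilde{A}}^{(-\alpha x_n,k+1)}_n$ (and its part-3 analogue). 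To expand the left side one commutes $D_{n-1}$ freely through the coefficients $C_a(k,s)$ (which depend only on $x_n$) and commutes $D_n$ through them via $D_n f(x_n)=f(x_n)D_n+\bigl(f(qx_n)-f(x_n)\bigr)A_nT_{q,x_n}$, noting that $a=-\alpha x_n$ (resp.\ $\beta x_n$) becomes $qa$ under $T_{q,x_n}$; after this, matching the two sides is purely a regrouping of coefficients, governed by the three recursions $(\ref{Cid})$ for $C_a(y,s)$ — the same bookkeeping used for the geometric and Bernoulli true evolution equations in Section~\ref{truesec} (Lemmas~\ref{cqm} and~\ref{bernlem}). The step I expect to be the main obstacle is precisely this last one: the correction terms $\bigl(f(qx_n)-f(x_n)\bigr)A_nT_{q,x_n}$ produced when $D_n$ crosses the coefficients $C_a(k,s)$ must be shown to recombine — via the engine identity above and the relations $(\ref{Cid})$ — into exactly the combination of $D_{n-1}$-terms demanded by $\mathcal{\tilde{A}}^{(\cdot,k+1)}_n$, so the noncommutativity of multiplication by functions of $x_n$ with $D_n$ is what has to be controlled. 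By contrast, part 1's obstacle, the quadratic commutation relation, is disposed of cleanly by the eigenfunction computation, thanks to the fact that at $t=0$ the eigenvalue $q^{\mu_n}$ depends only on the smallest part of $\mu$.
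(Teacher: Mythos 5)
Your part 1 is correct, and it takes a genuinely different route from the paper. You get the $k=1$ commutator from the elementary coefficient identity $\sum_{i=1}^n A_i x_i T_{q,x_i}=x_n(D_n-D_{n-1})$ (which checks out), then establish the $q$-commutation $D_nX=qXD_n$ for $X=[D_n,\sum_{i\leq n}x_i]$ by evaluating on the Macdonald basis via the Pieri rule, where the $t=0$ eigenvalue $q^{\mu_n}$ makes the coefficient factor as $(q^{\nu_n}-q^{\mu_n})(q^{\nu_n}-q^{\mu_n+1})$ and vanish, and finally telescope. The paper instead reduces by polarization to product functions $f(x_1)\cdots f(x_n)$ and verifies the identity on nested contour integral representations, using the vanishing of symmetrized $(z_i-qz_{i+1})$ factors. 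Your verification holds on functions symmetric in $x_1,\ldots,x_n$, which is the same scope as the paper's reduction and is what the application in Section \ref{MacSec} needs, so this part is fine.

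Parts 2 and 3, however, contain a genuine gap, and it is exactly the step you flag as the expected obstacle. The base cases and the reduction of part 2 to $M^k=\mathcal{\tilde{A}}^{(-\alpha x_n,k)}_n$ are correct, but the inductive step is not "purely a regrouping of coefficients" after commuting through the scalars $C_a(k,s)$: expanding $M\,\mathcal{\tilde{A}}^{(a,k)}_n$ with the rules you list still leaves terms of the form $D_n(D_{n-1})^s(D_n)^{k-s}$, and $D_n$ does not commute with $D_{n-1}$ (already $D_1D_2\neq D_2D_1$ for $n=2$), whereas the target $\mathcal{\tilde{A}}^{(a,k+1)}_n$ is normally ordered with all $D_{n-1}$'s on the left. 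Concretely, carrying out your own expansion for the step $k=1\to 2$ gives
\begin{equation*}
M^2-\mathcal{\tilde{A}}^{(a,2)}_n = a(1+a)\Bigl[(1-q)\bigl((D_n)^2-A_nT_{q,x_n}(D_n-D_{n-1})\bigr)-\bigl(D_nD_{n-1}-qD_{n-1}D_n\bigr)\Bigr],
\end{equation*}
so you need a nontrivial exchange relation between $D_{n-1}$, $D_n$ and the stray $A_nT_{q,x_n}$ terms; the relations (\ref{Cid}) are identities among scalar coefficients and cannot supply it, and neither the engine identity alone nor your eigenbasis trick (the functions here are not symmetric in $x_n$) disposes of it as stated. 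The relation does appear to be true (I checked it directly for $n=2$), so the plan is likely salvageable, but as written parts 2 and 3 are unproven. The paper avoids this ordering problem entirely by evaluating all operators on product functions, where $(D_{n-1})^s(D_n)^{k-s}$ has an explicit nested-contour-integral action and the verification becomes an integrand computation parallel to Lemmas \ref{cqm} and \ref{bernlem}.
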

\begin{proof}
We first remark that in the statement of the lemma, $\sum_{i=1}^{n} x_i$, $\prod_{i=1}^{n}\frac{1}{(\alpha x_i;q)_{\infty}}$ and $\prod_{i=1}^{n}(1+\beta x_i)$ should all be treated as operators which act by multiplication. Thus, we seek to prove the above identities as operators applied to general functions $F(x_1,\ldots, x_n)$. By polarization it suffices to prove these identities when applied to functions of the form $F(x_1,\ldots, x_n) = f(x_1)\cdots f(x_n)$. We prove part 1 of the lemma and simply note that analogous considerations (or comparison to the treatment of the geometric and Bernoulli many-body systems earlier in this paper) suffice to prove parts 2 and 3.

We can rewrite the desired identity we wish to prove as (adding harmless $1/F(x_1,\ldots,x_n)$ factors to both sides)
\begin{align*}
&\frac{d}{dt}\, \frac{1}{F(x_1,\ldots,x_n)} e^{-t\sum_{i=1}^{n} x_i} (D_n)^k e^{t\sum_{i=1}^{n} x_i} F(x_1,\ldots,x_n) \Big\vert_{t=0}\\
&=  \frac{1}{F(x_1,\ldots,x_n)}(1-q^k) x_n (D_{n-1}-D_n) (D_n)^{k-1} F(x_1,\ldots,x_n)
\end{align*}
for $F(x_1,\ldots, x_n) = f(x_1)\cdots f(x_n)$. This may be expressed in terms of nested contour integrals. The left-hand side becomes
\begin{equation}\label{exp1s}
\frac{(-1)^k q^{\frac{k(k-1)}{2}}}{(2\pi \iota)^k} \int \cdots \int \prod_{1\leq A<B\leq k} \frac{z_A-z_B}{z_A-qz_B} \left(\sum_{i=1}^{k} z_i\right) \prod_{j=1}^{k} \prod_{i=1}^{n} \frac{x_i}{x_i -z_j} \frac{f(qz_j)}{f(z_j)} \frac{dz_j}{z_j}
\end{equation}
while the right-hand side becomes
\begin{equation}\label{exp2s}
\frac{(-1)^k q^{\frac{k(k-1)}{2}}}{(2\pi \iota)^k}\int \cdots \int \prod_{1\leq A<B\leq k} \frac{z_A-z_B}{z_A-qz_B} \left((1-q^k) x_n \left(\frac{x_n-z_k}{x_n} -1\right)\right) \prod_{j=1}^{k} \prod_{i=1}^{n} \frac{x_i}{x_i -z_j} \frac{f(qz_j)}{f(z_j)} \frac{dz_j}{z_j}.
\end{equation}
In both sets of integrals we assume that the $z_A$ contour can be chosen so as to contain $qz_B$ for $B>A$ as well as $\{x_1,\ldots, x_n\}$, but not 0 or any poles of $f(qz)/f(z)$. Observe that due to the choice of nested contours, it follows that for $1\leq i\leq y-1$ and any symmetric function $G(z_1,\ldots, z_k)$ (which does not introduce new poles into the below integrand)
\begin{equation*}
\int\cdots \int \prod_{1\leq A<B\leq k} \frac{z_A-z_B}{z_A-qz_B} (z_i - q z_{i+1}) G(z_1,\ldots, z_k)=0.
\end{equation*}
Using this, we find that the term $\sum_{i=1}^{k} z_i$ in the integrand of (\ref{exp1s}) can be turned into $(1-q^k)(-z_k)$ which exactly matches the term in the integrand of (\ref{exp2s}) and thus proves the lemma.
\end{proof}

To conclude, let us explain how the commutation relations imply the true evolution equation. Observe that
\begin{equation*}
\frac{d}{dt} h(t;\vec{y}) = -p_1 e^{-tp_1} (D_1)^{y_1} \cdots (D_N)^{y_N} e^{t p_1} + e^{-tp_1} (D_1)^{y_1}\cdots (D_{N})^{y_N} p_1 e^{tp_1}
\end{equation*}
where $p_1=\sum_{i=1}^{N} x_i$, and on the right-hand side we assign $x_1=a_1,\ldots,x_N=a_N$ after applying all of the operators (we suppress writing this assignment above and in what follows). The commutation relation allows us to pull the $p_1$ in the second term from the right side of the expression to the left side. In the first application of (\ref{commute}) we replace
$$
(D_N)^{y_N} p_1 = p_1 (D_N)^{y_N} + x_N (1-q^{y_N}) (D_{N-1} - D_N) (D_N)^{y_N-1}
$$
and find
\begin{eqnarray*}
\frac{d}{dt} h(t;\vec{y}) &=& -p_1 e^{-tp_1} (D_1)^{y_1} \cdots (D_N)^{y_N} e^{t p_1} + e^{-tp_1} (D_1)^{y_1}\cdots (D_{N-1})^{y_{N-1}} p_1 (D_{N})^{y_N} e^{tp_1}\\ &&+ x_N(1-q^{y_N}) e^{-tp_1} (D_1)^{y_1} \cdots (D_{N-1})^{y_{N-1}} (D_{N-1}-D_N) (D_N)^{y_N-1}e^{tp_1}.
\end{eqnarray*}
Repeating this $N$ times we find
\begin{equation*}
\frac{d}{dt} h(t;\vec{y}) = \sum_{i=1}^{N} x_i (1-q^{y_i}) e^{-tp_1} (D_1)^{y_1} \cdots (D_{i-1}-D_i) (D_{i})^{y_i} \cdots (D_{N})^{y_N}e^{tp_1},
\end{equation*}
the right-hand side of which is readily matched to that of the true evolution equation.


\begin{thebibliography}{alpha}
\bibitem{AKQ}
T.~Alberts, K.~Khanin, J.~Quastel.
\newblock Intermediate disorder regime for $1+1$ dimensional directed polymers.
\newblock arXiv:1202.4398.

\bibitem{ACQ}
G.~Amir, I.~Corwin, J.~Quastel.
\newblock Probability distribution of the free energy of the continuum directed random polymer in $1+1$ dimensions.
\newblock {\em Comm. Pure Appl. Math.},{\bf 64}:466--537, 2011.

\bibitem{AnderChap}
G.~Anderson.
\newblock The Oxford handbook of random matrix theory (Chapter 11).
\newblock Oxford University Press, 2011.

\bibitem{AAR}
G.~Andrews, R.~Askey, R.~Roy.
\newblock {\it Special functions.}
\newblock Cambridge University Press, 2000.

\bibitem{BKS}
M.~Bal\'{a}zs, J.~Komj\'{a}thy, T.~Sepp\"{a}l\"{a}inen.
\newblock Microscopic concavity and fluctuation bounds in a class of deposition processes.
\newblock {\it Ann. Inst. H. Poincar\'{e} B},{\bf 48}:151--187, 2012.


\bibitem{Banger}
G.~Bangerezako.
\newblock {\em An introduction to q-difference equations.}\newline
\newblock \url{http://perso.uclouvain.be/alphonse.magnus/gbang/qbook712.pdf}


\bibitem{Bethe}
H.~Bethe.
\newblock Zur Theorie der Metalle. I. Eigenwerte und Eigenfunktionen der linearen Atomkette. (On the theory of metals. I. Eigenvalues and eigenfunctions of the linear atom chain)
\newblock {\it Zeitschrift fur Physik}, {\bf 71}:205--226, 1931.



\bibitem{BorChap}
A.~Borodin.
\newblock The Oxford handbook of random matrix theory (Chapter 4).
\newblock Oxford University Press, 2011.


\bibitem{BorCor}
A.~Borodin, I.~Corwin.
\newblock Macdonald processes.
\newblock {\em Probab. Theor. Rel. Fields}, to appear. arXiv:1111.4408.

\bibitem{BorCorhalfspace}
A.~Borodin, I.~Corwin.
\newblock Directed random polymers via nested contour integrals.
\newblock In preparation.

\bibitem{BCR}
A.~Borodin, I.~Corwin, D.~Remenik.
\newblock Log-Gamma polymer free energy fluctuations via a Fredholm determinant identity.
\newblock arXiv:1206.4573.

\bibitem{BCS}
A.~Borodin, I.~Corwin, T.~Sasamoto.
\newblock From duality to determinants for q-TASEP and ASEP.
\newblock {\em Ann. Probab.} to appear. arXiv:1207.5035.


\bibitem{BorCorFer}
A.~Borodin, I.~Corwin, P.~L.~Ferrari.
\newblock Free energy fluctuations for directed polymers in random media in $1+1$ dimension.
\newblock {\em Commun. Pure Appl. Math.}, to appear.

\bibitem{BCFV}
A.~Borodin, I.~Corwin, P.~L.~Ferrari, B.~Vet\"{o}.
\newblock Stationary solution of 1d KPZ equation. In preparation.

\bibitem{BCGS}
A.~Borodin, I.~Corwin, V.~Gorin, S.~Shakirov.
\newblock Observables of Macdonald processes. In preparation.

\bibitem{BorFer}
A.~Borodin, P.~L.~Ferrari.
\newblock Anisotropic growth of random surfaces in $2+1$ dimensions.
\newblock {\it Commun. Math Phys.} to appear.

\bibitem{BorPet}
A.~Borodin, L.~Petrov.
\newblock Nearest neighbor Markov dynamics on Macdonald processes. In preparation.

\bibitem{CDR}
P.~Calabrese, P.~Le Doussal, A.~Rosso.
\newblock Free-energy distribution of the directed polymer at high temperature.
\newblock {\it Euro. Phys. Lett.}, {\bf 90}:20002, 2010.

\bibitem{Cod}
E.~A.~Coddington, N.~Levinson.
\newblock {\em Theory of Ordinary Differential Equations}.
\newblock McGraw Hill, 1955.


\bibitem{COSZ}
I.~Corwin, N.~O'Connell, T.~Sepp\"{a}l\"{a}inen, N.~Zygouras.
\newblock Tropical combinatorics and Whittaker functions.
\newblock arXiv:1110.3489.

\bibitem{CQ}
I.~Corwin, J.~Quastel.
\newblock Crossover distributions at the edge of the rarefaction fan.
\newblock {\em Ann. Probab.} to appear, arXiv:1006.1338.

\bibitem{DiaconisFill}
P.~Diaconis, J.~A.~Fill.
\newblock Strong stationary times via a new form of duality.
\newblock {\it Ann. Probab.}, {\bf 18}:1483--1522 (1990).

\bibitem{Dot}
V.~Dotsenko.
\newblock Bethe ansatz derivation of the Tracy-Widom distribution for one-dimensional directed polymers.
\newblock {\it Euro. Phys. Lett.}, {\bf 90}:20003, 2010.




\bibitem{HO}
G.~J.~Heckman, E.~M.~Opdam.
\newblock Yang's system of particles and Hecke algebras.
\newblock {\it Ann. Math.}, {\bf 145}:139--173, 1997.

\bibitem{KJ}
K.~Johansson.
\newblock Shape fluctuations and random matrices.
\newblock {\em Comm. Math. Phys.}, {\bf 209}:437--476, 2000.

\bibitem{Kleiber}
C.~Kleiber, J.~Stoyanov.
\newblock Multivariate distributions and the moment problem.
\newblock {\it J. Multivar. Analy}, {\bf 113}:7--18, 2013.

\bibitem{LL}
E.H.~Lieb, W.~Liniger.
\newblock Exact Analysis of an Interacting Bose Gas. I. The General Solution and the Ground State.
\newblock {\it Phys. Rev. Lett.}, {\bf 130}:1605--1616 (1963).








\bibitem{M}
I.G.~Macdonald.
\newblock {\it Symmetric Functions and Hall Polynomials.}
\newblock 2nd ed. Oxford University Press, New York. 1999.


\bibitem{QRMF}
G.~Moreno Flores, D.~Remenik, J.~Quastel.
\newblock In preparation.

\bibitem{OCon}
N.~O'Connell.
\newblock Directed polymers and the quantum Toda lattice.
\newblock  {\em Ann. Probab.}, {\bf 40}:437--458, 2012.

\bibitem{OY}
N.~O'Connell, M.~Yor.
\newblock Brownian analogues of Burke's theorem.
\newblock {\it Stoch. Proc. Appl.}, {\bf 96}:285--304, 2001.

\bibitem{OConPei}
N.~O'Connell, Y.~ Pei.
\newblock An insertion algorithm associated with q-Whittaker functions.
\newblock arXiv:1212.6716.


\bibitem{Peter}
L.~C.~Petersen.
\newblock On the relation between the multidimensional moment problem and the one-dimensional moment problem.
\newblock {\it Math. Scandin.}, {\bf 51}:361--366, 1982.

\bibitem{Pov}
A.~Povolotsky.
\newblock Bethe ansatz solution of zero-range process with nonuniform stationary state.
\newblock {\it Phys. Rev. E} {\bf 69}:061109, 2004.




\bibitem{SaSp}
T.~Sasamoto, H.~Spohn.
\newblock One-dimensional KPZ equation: an exact solution and its universality.
\newblock {\em Phys. Rev. Lett.}, {\bf 104}:23 (2010).


\bibitem{SasWad}
T.~Sasamoto, M.~Wadati.
\newblock Exact results for one-dimensional totally asymmetric diffusion models.
\newblock {\it J. Phys. A}, {\bf 31}:6057--6071, 1998.

\bibitem{SeppLog}
T.~Sepp\"{a}l\"{a}inen.
\newblock Scaling for a one-dimensional directed polymer with boundary conditions.
\newblock {\it Ann. Probab.}, {\bf 40}:19--73 (2012).




\bibitem{Takeyama}
Y.~Takeyama.
\newblock A discrete analogue of periodic delta Bose gas and affine Hecke algebra.
\newblock arXiv:1209.2758.


\bibitem{TW1}
C.~Tracy, H.~Widom.
\newblock Integral formulas for the asymmetric simple exclusion process.
\newblock {\em Commun. Math. Phys.}, {\bf 279}:815--844, 2008.
\newblock Erratum: {\em Commun. Math. Phys.} {\bf 304}:875--878 (2011).

\bibitem{TW2}
C.~Tracy, H.~Widom.
\newblock A Fredholm determinant representation in ASEP.
\newblock {\em J. Stat. Phys.}, {\bf 132}:291--300, 2008.

\bibitem{TW3}
C.~Tracy, H.~Widom.
\newblock Asymptotics in ASEP with step initial condition.
\newblock {\em Comm. Math. Phys.}, {\bf 290}:129--154, 2009.



\bibitem{WarWind}
J.~Warren, P.~Windridge.
\newblock Some examples of dynamics for Gelfand Tsetlin patterns.
\newblock {\em Elect. J. Probab.}, {\bf 14} 1745--1769 (2009).

\end{thebibliography}
\end{document}